\documentclass[preprint]{elsarticle}

\usepackage{graphicx}%
\usepackage{multirow}%
\usepackage{amsmath,amssymb,amsfonts}%
\usepackage{amsthm}%
\usepackage{mathrsfs}%
\usepackage[title]{appendix}%
\usepackage{xcolor}%
\usepackage{textcomp}%
\usepackage{manyfoot}%
\usepackage{booktabs}%
\usepackage{float}
\usepackage{placeins}%
\usepackage{algorithm}%
\usepackage{algorithmicx}%
\usepackage{algpseudocode}%
\usepackage{listings}%
\usepackage{hyperref} 
\usepackage{subcaption}
\usepackage{array}
\usepackage{geometry} % 用于设置页面边距

\newcommand{\R}{\mathbb{R}}
\newcommand{\B}{\mathbb{B}}
\newcommand{\eps}{\epsilon}
\newcommand{\ip}[2]{\langle #1,#2\rangle}
\newcommand{\norm}[1]{\|#1\|}

\newcommand{\prox}{\operatorname{prox}}
\newcommand{\argmin}{\operatorname*{argmin}}
\newcommand{\argmax}{\operatorname*{argmax}}
\newcommand{\cS}{\mathcal{S}}
\newcommand{\cL}{\mathcal L}
\newcommand{\cF}{\mathcal F}

\usepackage{tabularx}
\usepackage{color}
\usepackage{multirow}
\usepackage{mathtools}
\usepackage{enumerate}
\usepackage{makecell}
\newenvironment{herefigure}{\par\smallskip\noindent\begin{minipage}{\linewidth}\centering}{\end{minipage}\par\smallskip}

\newcommand{\msc}[1]{\textbf{MSC Code}#1}
\theoremstyle{thmstyleone}%
\newtheorem{theorem}{Theorem}[section]%  meant for continuous numbers
\newtheorem{proposition}{Proposition}[section]%

\newtheorem{lemma}{Lemma}[section]

\theoremstyle{thmstyletwo}%
\newtheorem{example}{Example}[section]%
\newtheorem{remark}{Remark}[section]%

\theoremstyle{thmstylethree}%
\newtheorem{definition}{Definition}[section]%
\newtheorem{assumption}{Assumption}[section]
\journal{Communications in Nonlinear Science and Numerical Simulation}

\begin{document}

\begin{frontmatter}

%% Title, authors and addresses

%% use the tnoteref command within \title for footnotes;
%% use the tnotetext command for theassociated footnote;
%% use the fnref command within \author or \affiliation for footnotes;
%% use the fntext command for theassociated footnote;
%% use the corref command within \author for corresponding author footnotes;
%% use the cortext command for theassociated footnote;
%% use the ead command for the email address,
%% and the form \ead[url] for the home page:
%% \title{Title\tnoteref{label1}}
%% \tnotetext[label1]{}
%% \author{Name\corref{cor1}\fnref{label2}}
%% \ead{email address}
%% \ead[url]{home page}
%% \fntext[label2]{}
%% \cortext[cor1]{}
%% \affiliation{organization={},
%%             addressline={},
%%             city={},
%%             postcode={},
%%             state={},
%%             country={}}
%% \fntext[label3]{}

\title{A switching proximal gradient method for constrained difference-of-convex optimization problems}

%% use optional labels to link authors explicitly to addresses:
%% \author[label1,label2]{}
%% \affiliation[label1]{organization={},
%%             addressline={},
%%             city={},
%%             postcode={},
%%             state={},
%%             country={}}
%%
%% \affiliation[label2]{organization={},
%%             addressline={},
%%             city={},
%%             postcode={},
%%             state={},
%%             country={}}

%% Authors
\author[1]{Yuncheng Liu}
\ead{yuncheng.liu@swun.edu.cn}

\author[2]{Chuangliang Zhang}
\ead{clzhang1992@sina.com}

\author[3]{Bo Yang\corref{cor1}}
\ead{bbo_yang@163.com}

\cortext[cor1]{Corresponding author.}

%% Affiliations
\affiliation[1]{
    organization={School of Mathematics, Southwest Minzu University},
    %addressline={},
    city={Chengdu},
    postcode={610041},
    state={Sichuan},
    country={P.R. China}
}

\affiliation[2]{
    organization={School of Mathematics, Jiaying University},
    %addressline={},
    city={Meizhou},
    postcode={514015},
    state={Guangdong},
    country={P.R. China}
}

\affiliation[3]{
    organization={School of Mathematics, Chengdu Normal University},
    %addressline={},
    city={Chengdu},
    postcode={611130},
    state={Sichuan},
    country={P.R. China}
}
%% Abstract
\begin{abstract}
We study difference-of-convex optimization problems with several composite
inequality constraints and develop a majorization-guided switching proximal
gradient (MGSPG) method. Specifically, at each feasible outer point, quadratic majorization and linearization yield a strongly convex composite surrogate problem. To solve this surrogate approximately, we employ a switching proximal gradient (SPG) method, which takes a proximal gradient step for the objective when the current point is nearly feasible and for a most violated constraint otherwise. This construction requires neither projection onto the joint surrogate feasible set nor multiplier computation. By establishing a common invariant ball and using an increasing weight argument, we obtain an \(O(\tau^{-1})\) inner complexity bound for SPG. Building on this bound, we show that, under boundedness of the feasible level set and a uniform Slater condition for the surrogates, MGSPG finds an \(\varepsilon\)-KKT point in \(O(\varepsilon^{-2})\) outer iterations and \(O(\varepsilon^{-4})\) selected component proximal gradient updates. Numerical experiments on SCAD-constrained and groupwise-constrained regression problems demonstrate the viability and computational efficiency of our method compared with existing optimization algorithms.
\end{abstract}

%% Keywords
\begin{keyword}
Composite inequality constraints \sep switching proximal gradient method \sep iteration complexity \sep difference-of-convex optimization.
\end{keyword}

\end{frontmatter}

%% Add \usepackage{lineno} before \begin{document} and uncomment 
%% following line to enable line numbers
%% \linenumbers

%% main text
%Version 3.1 December 2024
% See section 11 of the User Manual for version history
%
%%%%%%%%%%%%%%%%%%%%%%%%%%%%%%%%%%%%%%%%%%%%%%%%%%%%%%%%%%%%%%%%%%%%%%
%%                                                                 %%
%% Please do not use \input{...} to include other tex files.       %%
%% Submit your LaTeX manuscript as one .tex document.              %%
%%                                                                 %%
%% All additional figures and files should be attached             %%
%% separately and not embedded in the \TeX\ document itself.       %%
%%                                                                 %%
%%%%%%%%%%%%%%%%%%%%%%%%%%%%%%%%%%%%%%%%%%%%%%%%%%%%%%%%%%%%%%%%%%%%%
\msc{: 65Y20, 90C06, 90C26, 90C55}
\section{Introduction}%\label{sec:intro}
 In this paper, we consider
the following composite inequality constrained difference-of-convex (DC) optimization problems 
\begin{equation}\label{eq:problem}
\begin{aligned}
  \min_{x\in\R^n}\quad
  & F(x):=f(x)+P_1(x)-P_2(x)\\
  \mathrm{s.t.}\quad
  & G_i(x):=g_i(x)+Q_{i1}(x)-Q_{i2}(x)\le0,
  \qquad i\in[m]:=\{1,\ldots,m\},
\end{aligned}
\end{equation}
where $f$, $g_i$, $P_1,P_2,Q_{i1}$ and $Q_{i2}$  satisfy Assumption \ref{ass:data} below.
\begin{assumption}\label{ass:data}
In problem~\eqref{eq:problem},
\begin{enumerate}[(i)]
\item the functions $f:\R^n\to\R$ and $g_i:\R^n\to\R$, $i\in[m]$,
are continuously differentiable, but need not be convex. Their
gradients are Lipschitz continuous with constants $L_f$ and
$L_{g_i}$, respectively;
\item the functions $P_1,P_2,Q_{i1},Q_{i2}:\R^n\to\R$ are
convex;
\item there exists an $x^0 \in\R^n$ such that  $G_i(x^0)\le0$, $i\in[m]$ and the set
\begin{equation*}%\label{eq:omega0}
  \Omega_0:=  \left\{ x\in\R^n:  G_i(x)\le0,\ i\in[m],\ F(x)\le F(x^0) \right\}
\end{equation*}
is bounded.
\end{enumerate}
\end{assumption}
\noindent Assumption~\ref{ass:data}(i)--(ii) implies that $F$ and every
$G_i$ are continuous, and hence that $\Omega_0$ is closed.  Since
$\Omega_0$ is bounded by~(iii), it is compact.
Continuity of $F$ therefore gives
\begin{equation*}%\label{eq:inf}
  F_{\inf} := \inf\{F(x):G_i(x)\le0,\ i\in[m]\}  =  \min_{x\in\Omega_0}F(x) >-\infty.
\end{equation*}
For computation, we assume access to function values, the gradients
of $f$ and $g_i$, a subgradient of each $P_2$ and $Q_{i2}$. For $\gamma>0$, the
proximal mappings $\prox_{\gamma P_1}$ and
$\prox_{\gamma Q_{i1}}$ can be computed efficiently.
For background on convex subdifferentials and proximal mappings, see
\cite{Rockafellar1970,Beck2017}. The separation of smooth and proximable
terms is central to forward--backward splitting, accelerated
proximal-gradient methods, and broader proximal algorithms
\cite{CombettesWajs2005,BeckTeboulle2009,ParikhBoyd2014}.

Functional constraints express separate requirements on groups or
design scenarios~\cite{KomiyamaTakedaHondaShimao2018,
CotterEtAl2019Constraints,BenTalNemirovski1997}, while DC terms allow
nonconvex regularization~\cite{LeThiPhamDinh2018}.
DC decompositions also underlie DCA and proximal DC algorithms for
structured nonconvex objectives
\cite{LeThiPhamDinh2005,WenChenPong2018}.
The retained terms $P_1$ and $Q_{i1}$ may have simple individual
proximal mappings even when their joint constrained problem is
difficult to solve. The following models illustrate these applications.

\begin{example}[Robust sparse regression with groupwise validation
constraints]\label{ex:application-regression}
Motivated by groupwise prediction requirements~\cite{AgarwalDudikWu2019},
we combine a Lorentzian loss~\cite{CarrilloBarnerAysal2010}, the
minimax concave penalty (MCP)~\cite{Zhang2010}, and weighted
$\ell_1-\ell_2$ penalties~\cite{YinLouHeXin2015}:
\begin{equation*}\label{eq:application-regression}
\begin{aligned}
\min_{x\in\R^n}\quad
&\frac1{N_0}\sum_{r=1}^{N_0}\ell_{\kappa_0}(a_r^\top x-b_r)
 +\frac{\beta}{2}\norm{x}^2
 +\sum_{j=1}^n p_{\lambda_{\rm reg},a_{\rm mcp}}(x_j)\\
\mathrm{s.t.}\quad
&\frac1{N_i}\sum_{r=1}^{N_i}\ell_{\kappa_i}(c_{ir}^\top x-d_{ir})
 -\nu_i+\alpha_i\bigl(\norm{D_i x}_1-\norm{D_i x}_2\bigr)\le0,
 \quad i\in[m].
\end{aligned}
\end{equation*}
Here $a_r,c_{ir}\in\R^n$ and $b_r,d_{ir}\in\R$ are training and
groupwise validation data, $N_i\ge1$ are integers for $i=0,\ldots,m$,
$D_i$ is positive diagonal, and $\nu_i$ is a prescribed threshold. The loss is
$\ell_\kappa(t)=\frac{\kappa^2}{2}\log(1+t^2/\kappa^2)$,
and $p_{\lambda_{\rm reg},a_{\rm mcp}}$ denotes the MCP with
$\lambda_{\rm reg}>0$ and $a_{\rm mcp}>1$.
We take $\beta>0$, $\kappa_i>0$ for $i=0,\ldots,m$, and
$\alpha_i>0$ for $i\in[m]$.
\end{example}

\begin{example}[Sparse design with smoothed spectral constraints]
\label{ex:application-spectral}
Motivated by semidefinite models in robust design
\cite{BenTalNemirovski1997}, we use log-trace-exp smoothing
\cite{Nesterov2007} and the preceding penalties to form
\begin{equation*}\label{eq:application-spectral}
\begin{aligned}
\min_{x\in\R^n}\quad
&\frac12\norm{Bx-c}^2+\frac{\beta}{2}\norm{x}^2
 +\sum_{j=1}^n p_{\lambda_{\rm reg},a_{\rm mcp}}(x_j)\\
\mathrm{s.t.}\quad
&\phi_{\delta_i}(\mathcal A_i(x))-\nu_i
 +\alpha_i\bigl(\norm{D_i x}_1-\norm{D_i x}_2\bigr)\le0,
 \quad i\in[m],
\end{aligned}
\end{equation*}
where $B\in\R^{q\times n}$, $c\in\R^q$, and
\[
\mathcal A_i(x):=A_{i0}+\sum_{j=1}^n x_jA_{ij},
\qquad
\phi_{\delta_i}(Y):=
\delta_i\log\operatorname{tr}\exp(Y/\delta_i).
\]
The matrices $A_{ij}\in\mathbb S^{r_i}$ are symmetric,
$\delta_i>0$, and $\nu_i$ is the prescribed spectral threshold.
The penalty parameters and matrices $D_i$ are as in
Example~\ref{ex:application-regression}.
\end{example}

Existing first-order methods often handle functional constraints by replacing the original problem
with a sequence of convex subproblems. For example, the moving-balls method
uses quadratic upper models to generate feasible iterates for
smooth nonlinear inequalities~\cite{AuslenderShefiTeboulle2010}.
Related work studies majorization--minimization
procedures~\cite{BoltePauwels2016}, sequential convex programming
with line search for constrained DC
problems~\cite{YuPongLu2021}, and smoothing methods for DC
objectives with convex composite or nonlinear conic
constraints~\cite{XuPongZhang2026,XuPongSze2025}.
Feasible sequential and proximal regularization methods solve
constrained local subproblems, possibly
inexactly~\cite{FacchineiLamparielloScutari2017,BoobDengLan2023}.
For convex problems, saddle-point subgradient methods provide
primal--dual guarantees~\cite{NedicOzdaglar2009}, stochastic
primal--dual methods use sampled constraint
queries~\cite{Xu2020}, and multiproximal methods combine
component models in a joint update~\cite{BolteChenPauwels2020}.
These approaches make the solution of convex subproblems
an essential part of the algorithm. One representative work that accommodates distinct convex
nonsmooth terms in the objective and constraints is the
level-constrained proximal-gradient method
of~\cite{BoobDengLan2025}. Specifically, its computational
assumption requires a joint quadratically constrained composite
problem to be solved exactly or to a prescribed accuracy.
However, access to the individual proximal mappings does not
by itself provide this solver. When these mappings are the
available building blocks, an inner method must specify how
to use them and quantify the work needed to attain the
required accuracy.

This issue arises naturally in the majorization construction
for~\eqref{eq:problem}. At a feasible outer iterate, linearizing
the subtracted convex terms and using strongly convex quadratic
upper models for the smooth terms gives
\begin{equation}\label{majorizationproblem}
\begin{aligned}
 \min_{z\in\R^n}\quad &H_0(z):=h_0(z)+R_0(z)\\
 \mathrm{s.t.}\quad &H_i(z):=h_i(z)+R_i(z)\le0,
 \qquad i\in[m],
\end{aligned}
\end{equation}
where each $h_i$ is smooth and strongly convex and each $R_i$
is finite-valued, convex, and proximable. Since these models
majorize the original functions and agree with them at the
outer iterate, the surrogate is feasible and its feasible
points satisfy the original constraints. An approximate inner
solution, however, requires bounds on both constraint violation
and objective error before it can be used in the outer scheme.

Switching methods provide a natural starting point for the inner
problem. A basic scheme takes an objective step at a sufficiently
feasible point and a constraint step otherwise~\cite{Polyak1967}.
Related constructions include CoMirror
\cite{BeckEtAl2010CoMirror}, cooperative stochastic approximation
\cite{LanZhou2020}, and adaptive mirror descent
\cite{BayandinaEtAl2018}. This principle has also been analyzed for
weakly convex constraints~\cite{MaLinYang2020,HuangLin2023},
strongly convex proximal subproblems~\cite{JiaGrimmer2025},
and convexified DC subproblems~\cite{HeHuangYaoLin2025}. For composite problems, the treatment of the nonsmooth terms
determines the required computational operations.
Grimmer and Li~\cite{GrimmerLi2025} use a proximal mapping for
the objective's nonsmooth term and subgradients for constraint
corrections. Nesterov~\cite[Sections~5 and 6]{Nesterov2024}
studies max-linearized models with a common composite term and
Bregman projections onto individual linearized constraints.
In our setting, the available building blocks are the individual
pairs $(\nabla h_i,\prox_{\gamma R_i})$; these do not generally
provide the proximal mapping of the maximum constraint function
through one component call. A direct precedent is Titov et al.\
\cite[Remark~2.4 and Appendix~A]{TitovEtAl2021}, whose switching
mirror steps retain each selected component's own convex term.
With Euclidean geometry and smooth linearized parts, these steps
reduce to proximal gradient updates. Their relatively Lipschitz
convex analysis gives $O(\tau^{-2})$ complexity for objective and
feasibility accuracy $\tau$.

The strongly convex surrogates considered here motivate a sharper
inner analysis within this individual proximal gradient framework.
We seek an $O(\tau^{-1})$ complexity bound for the one-sided
objective residual and constraint violation, with constants
uniform over feasible outer centers under the stated assumptions.
A common invariant ball controls the switching trajectory without
global component Lipschitz bounds or a prescribed bounded domain,
while strong convexity and weighted estimates yield the required
accuracy guarantee. Building on these composite switching updates, we analyze the
following switching proximal gradient (SPG) scheme for the
strongly convex majorization surrogate \eqref{majorizationproblem}. Specifically, at an inner iterate
$z^t$, choose
\[
 i_t\in\operatorname*{argmax}_{i\in[m]}H_i(z^t),
 \qquad
 V_t:=H_{i_t}(z^t).
\]
For a tolerance $\tau>0$ and stepsize $\gamma_t>0$, set
\[
 z^{t+1}
 =
 \begin{cases}
 \displaystyle
 \prox_{\gamma_t R_0}
 \bigl(z^t-\gamma_t\nabla h_0(z^t)\bigr),
 & V_t\le\tau,\\[2mm]
 \displaystyle
 \prox_{\gamma_t R_{i_t}}
 \bigl(z^t-\gamma_t\nabla h_{i_t}(z^t)\bigr),
 & V_t>\tau.
 \end{cases}
\]
Thus each update uses one component's gradient and proximal
mapping, retaining its own nonsmooth term. The maximum constraint
value determines the mode and selected constraint; its proximal
mapping is not needed. 

Embedding SPG in feasible DC majorization gives the
majorization-guided switching proximal-gradient method (MGSPG).
The method requires neither projection onto the joint surrogate feasible set nor
multiplier computation. Our main contributions are threefold.
First, we establish a common invariant ball for the objective
and constraint proximal gradient updates of each surrogate,
controlling the switching trajectory without global component
Lipschitz bounds or a prescribed bounded domain.
Second, strong convexity and weighted component estimates ensure
that the objective-step weighted average attains maximum
constraint violation and one-sided objective residual at most
$\tau$ within $O(\tau^{-1})$ selected component updates.
Under the stated assumptions, the constants are uniform over
feasible outer centers.
Third, under a uniform surrogate Slater condition for a fixed
joint subgradient selection, uniform multiplier bounds yield
a calibrated outer displacement test. With the prescribed inner
accuracy, MGSPG either certifies the current center as an
$\eps$-KKT point in the sense of Section~\ref{sec:model}
or takes a feasible descent step. This yields
$O(\eps^{-2})$ outer iterations and $O(\eps^{-4})$ selected
component proximal gradient updates, with constants independent
of the target accuracy. Exact surrogate solutions and
multipliers enter only the analysis.

The remainder of the paper is organized as follows. Section~\ref{sec:model} introduces
the notation, stationarity notions, surrogate models, and
regularity condition. Section~\ref{sec:inner} develops SPG and
MGSPG and proves the inner accuracy bound. Using this bound,
Section~\ref{sec:outer} establishes the outer stationarity and
complexity guarantees. Section~\ref{sec:numerical} presents numerical
experiments. Finally, we give some concluding remarks in Section \ref{Conclusion}.

% Notation and preliminaries: replacement section; filename follows the requested name.
\section{Notation and preliminaries}
\label{sec:model}

In this paper, we use \(\mathbb{R}^n\) to denote the \(n\)-dimensional Euclidean space, and we use \(\langle \cdot, \cdot \rangle\) and \(\|\cdot\|\) to denote its inner product and the associated norm, respectively. 
For $x\in\R^n$ and $r\ge0$, we write
$\B(x,r):=\{z\in\R^n:\norm{z-x}\le r\}$, and for
$a\in\R$, we set $[a]_+:=\max\{a,0\}$.
Given a nonempty closed set $C\subseteq\R^n$, its distance
function is $\operatorname{dist}(x,C):=\inf_{y\in C}\norm{x-y}$,
and its indicator function is
\[
\delta_C(x):=
\begin{cases}
0, & x\in C,\\
+\infty, & x\notin C.
\end{cases}
\]
If $C$ is also convex, its normal cone at $x\in C$ is
\[
N_C(x):=\{v\in\R^n:\ip{v}{y-x}\le0 \quad \forall y\in C\}.
\]

For an extended-real-valued function
$h:\R^n\to[-\infty,+\infty]$, define
$\operatorname{dom}h:=\{x\in\R^n:h(x)<+\infty\}$.
We call $h$ proper if it never takes the value $-\infty$
and $\operatorname{dom}h\ne\emptyset$, and closed if it is
lower semicontinuous. For a proper closed convex function $h$,
the subdifferential at $x\in\operatorname{dom}h$ is
\[
\partial h(x):=
\{\xi\in\R^n:h(u)-h(x)\ge\ip{\xi}{u-x}
 \quad \forall u\in\R^n\}.
\]
For $\gamma>0$, its proximal map is
\[
\prox_{\gamma h}(v)
:=\argmin_{x\in\R^n}
\Big\{h(x)+\frac1{2\gamma}\norm{x-v}^2\Big\}.
\]
It is single-valued for every $v\in\R^n$; see, for example,
\cite[Chapter~1]{Beck2017}.
We denote the feasible set of \eqref{eq:problem} by $\cF:=\{x\in\R^n:G_i(x)\le0,\ i\in[m]\}$.

The model construction and its analysis use both sides of the
smoothness inequality. Specifically, if $h:\R^n\to\R$ is
differentiable with an $L_h$-Lipschitz gradient, then
\begin{equation}\label{eq:two-sided-smoothness}
-\frac{L_h}{2}\norm{y-x}^2
\le h(y)-h(x)-\ip{\nabla h(x)}{y-x}
\le\frac{L_h}{2}\norm{y-x}^2,
\qquad x,y\in\R^n.
\end{equation}
The upper estimate supplies the quadratic majorants used below.
The lower estimate implies that $h+L_h\norm{\cdot}^2/2$ is
convex, that is, $h$ is $L_h$-weakly convex. More generally,
$h$ is $\omega$-weakly convex, for $\omega\ge0$, if
$h+\omega\norm{\cdot}^2/2$ is convex.

We next specify the stationarity notions used in the convergence
analysis. Since a DC decomposition is generally nonunique,
both definitions below refer to the fixed decomposition in
\eqref{eq:problem}.

\begin{definition}[KKT point]
\label{def:kkt}
A point $x\in\R^n$ is a KKT point of \eqref{eq:problem} if
there exist $\lambda\in\R_+^m$ and
\[
p_1\in\partial P_1(x),\quad p_2\in\partial P_2(x),\qquad
q_{i1}\in\partial Q_{i1}(x),\quad q_{i2}\in\partial Q_{i2}(x)
\]
such that
\[
\begin{gathered}
0=\nabla f(x)+p_1-p_2+
\sum_{i=1}^m\lambda_i
\bigl(\nabla g_i(x)+q_{i1}-q_{i2}\bigr),\\
G_i(x)\le0,\qquad
\lambda_iG_i(x)=0,\quad i\in[m].
\end{gathered}
\]
\end{definition}

For approximate stationarity, we allow the retained and
linearized convex terms to be evaluated at two nearby points.
Motivated by \cite[Definition~6.2]{HeHuangYaoLin2025}, the
following variant requires exact feasibility at the nearby
point and bounds the sum of the complementarity residuals.

\begin{definition}[(Approximate) KKT point]
\label{def:near-kkt}
Let $\eps_1,\eps_2\ge0$. A point $x\in\R^n$ is an
$(\eps_1,\eps_2)$-KKT point of \eqref{eq:problem} if there
exist $z\in\R^n$, $\lambda\in\R_+^m$, and
\[
p_1\in\partial P_1(z),\quad p_2\in\partial P_2(x),\qquad
q_{i1}\in\partial Q_{i1}(z),\quad q_{i2}\in\partial Q_{i2}(x)
\]
such that
\[
\begin{gathered}
\Big\|
\nabla f(z)+p_1-p_2+
\sum_{i=1}^m\lambda_i
\bigl(\nabla g_i(z)+q_{i1}-q_{i2}\bigr)
\Big\|\le\eps_1,\\
G_i(z)\le0\quad(i\in[m]),\qquad
\sum_{i=1}^m|\lambda_iG_i(z)|\le\eps_1,\qquad
\norm{z-x}\le\eps_2.
\end{gathered}
\]
When $\eps_1=\eps_2=\eps$, we simply say that $x$ is an
$\eps$-KKT point.
For $\eps=0$, we have $z=x$ and every term
$|\lambda_iG_i(x)|$ vanishes. Thus this definition reduces
to Definition~\ref{def:kkt}.
\end{definition}

This two-point evaluation is consistent with the surrogate
construction: the subtracted convex terms are linearized at
the current center, while $P_1$ and $Q_{i1}$ are retained.
For each $i\in[m]$, fix a selection
$\zeta_i(x)\in\partial Q_{i2}(x)$ on $\Omega_0$; such
selections exist because $Q_{i2}$ is finite convex.
The resulting joint selection is fixed throughout, and the
uniform Slater condition below applies to the models it generates.

Choose $\rho_f,\rho_{g_i}>0$ and set
$\mu_f:=L_f+\rho_f$ and $\mu_{g_i}:=L_{g_i}+\rho_{g_i}$.
At a center $x^k\in\Omega_0$, choose
$\xi^k\in\partial P_2(x^k)$, set
$\zeta_i^k:=\zeta_i(x^k)$, and define
\begin{align*}
\Phi_k(z)
&:=f(x^k)+\ip{\nabla f(x^k)-\xi^k}{z-x^k}
+\frac{\mu_f}{2}\norm{z-x^k}^2
+P_1(z)-P_2(x^k),\\
\Psi_{i,k}(z)
&:=g_i(x^k)+\ip{\nabla g_i(x^k)-\zeta_i^k}{z-x^k}
+\frac{\mu_{g_i}}2\norm{z-x^k}^2
+Q_{i1}(z)-Q_{i2}(x^k).
\end{align*}
By convexity of $P_1$ and $Q_{i1}$, these models are
$\mu_f$- and $\mu_{g_i}$-strongly convex, respectively.
Moreover, the upper estimate in \eqref{eq:two-sided-smoothness}
and the subgradient inequalities for $P_2$ and $Q_{i2}$ give
\[
\begin{aligned}
F(z)+\frac{\rho_f}{2}\norm{z-x^k}^2
&\le\Phi_k(z),\\
G_i(z)+\frac{\rho_{g_i}}2\norm{z-x^k}^2
&\le\Psi_{i,k}(z),\qquad i\in[m],
\end{aligned}
\]
for every $z\in\R^n$. At the center, the models agree with
the original functions:
\begin{equation}\label{eq:center-equality}
\Phi_k(x^k)=F(x^k),
\qquad
\Psi_{i,k}(x^k)=G_i(x^k),\quad i\in[m].
\end{equation}
These properties lead to the surrogate problem
\begin{equation}\label{eq:surrogate}
\begin{aligned}
\min_{z\in\R^n}\quad
&\Phi_k(z)\\
\mathrm{s.t.}\quad
&\Psi_{i,k}(z)\le0,\quad i\in[m],
\end{aligned}
\end{equation}
with feasible set
$\cS_k:=\{z\in\R^n:\Psi_{i,k}(z)\le0,\ i\in[m]\}$.
Since $x^k$ is feasible, \eqref{eq:center-equality} gives
$x^k\in\cS_k$. In addition, majorization implies
$\cS_k\subseteq\cF$.

To analyze the surrogate KKT systems uniformly, we need a
quantitative form of strict feasibility. For a fixed surrogate,
Slater's condition guarantees KKT multipliers at an optimal
solution \cite[Corollary~28.2.1 and Theorem~28.3]{Rockafellar1970}.
However, the constraints vary with the center and the selected
subgradients, so separate Slater conditions do not by themselves
give a uniform multiplier bound. We therefore require a common
strict-feasibility margin, as in related analyses of regularized
and convexified subproblems
\cite{MaLinYang2020,HeHuangYaoLin2025,BoobDengLan2023,BoobDengLan2025}.
Together with the uniform bounds established below, this margin
will control the surrogate multipliers.

To state the condition independently of the iteration index,
for $x\in\Omega_0$ and $\zeta_i\in\partial Q_{i2}(x)$, define
\[
\Psi_{i,x,\zeta_i}(z)
:=g_i(x)+\ip{\nabla g_i(x)-\zeta_i}{z-x}
+\frac{\mu_{g_i}}2\norm{z-x}^2
+Q_{i1}(z)-Q_{i2}(x),
\]
and write $\Psi_{i,x}:=\Psi_{i,x,\zeta_i(x)}$ for the model
generated by the prescribed selection.

\begin{assumption}[Uniform surrogate Slater condition]
\label{ass:slater}
There exists $\sigma_S>0$ such that, for every $x\in\Omega_0$,
some $s_x\in\R^n$ satisfies
\[
\Psi_{i,x}(s_x)\le-\sigma_S,
\qquad i\in[m].
\]
\end{assumption}

\begin{remark}%\label{rem:uniform-surrogate-slater}
The witness $s_x$ may depend on the center, but it must satisfy
all surrogate constraints with the same margin $\sigma_S$.
The assumption concerns the prescribed joint selection and
requires no continuity of that selection. Notice that
$\Psi_{i,x}(x)=G_i(x)\le0$ already guarantees feasibility;
the uniform strict margin is used to bound the multipliers.
\end{remark}

\begin{remark}[A sufficient margin for the regression model]
\label{rem:regression-slater}
We conclude by verifying the uniform margin for
Example~\ref{ex:application-spectral}.
Write its group loss as $\mathcal L_i$, so that
$g_i=\mathcal L_i-\nu_i$. The lower smoothness inequality gives
\[
g_i(x)-\ip{\nabla g_i(x)}{x}
\le \mathcal L_i(0)-\nu_i+\frac{L_{g_i}}2\norm{x}^2.
\]
Positive homogeneity of $Q_{i2}$ and $Q_{i1}(0)=0$ then yield
\[
\Psi_{i,x,\zeta_i}(0)
\le \mathcal L_i(0)-\nu_i
+\frac{2L_{g_i}+\rho_{g_i}}2\norm{x}^2.
\]
Since $F(x)\ge\beta\norm{x}^2/2$, a sufficient calibration is
\[
\nu_i\ge \mathcal L_i(0)
+\frac{(2L_{g_i}+\rho_{g_i})F(0)}{\beta}+\bar\delta,
\qquad\bar\delta>0.
\]
It gives $s_x=0$ and $\sigma_S=\bar\delta$ uniformly over the level set
and all admissible selections. This is a sufficient condition, not
a necessary one. The smaller thresholds used in
Section~\ref{sec:numerical} establish strict feasibility of the
initial point but do not verify this condition. Nor has a uniform
surrogate margin been established for the reported SCAD instances.
Accordingly, the theoretical guarantees remain conditional on
Assumption~\ref{ass:slater}; the numerical examples do not constitute
a verification of that assumption.
\end{remark}

\section{Algorithm}
\label{sec:inner}

We present a two-level method based on successive surrogate
construction. At each outer iteration, we form
\eqref{eq:surrogate} and apply the switching proximal gradient
method developed below. The inner method uses constraint values
to select either an objective step or a step for a most violated
constraint. Each update requires the gradient and proximal map
of the selected component, together with the values needed for
switching, and avoids projection onto the joint surrogate feasible
set. We first analyze the inner method for a generic strongly
convex composite problem and then specialize it to the DC surrogate.

\subsection{Switching proximal gradient method}
%\label{subsec:spg}

Consider the strongly convex composite problem
\begin{equation}\label{gccp}
\begin{aligned}
\min_{z\in\R^n}\quad
& H_0(z):=h_0(z)+R_0(z)\\
\mathrm{s.t.}\quad
& H_i(z):=h_i(z)+R_i(z)\le0,\quad i\in[m].
\end{aligned}
\end{equation}
Write $[m]_0:=\{0,1,\ldots,m\}$. We impose the following
assumptions on the component functions and the feasible set.

\begin{assumption}\label{ass:inner}
For every $i\in[m]_0$, the function $h_i$ is continuously
differentiable, $\mu_i$-strongly convex, and has an
$L_i$-Lipschitz gradient, where $0<\mu_i\le L_i$.
The function $R_i:\R^n\to\R$ is convex, and its value and
proximal map are available. The set
\[
\mathcal C:=\{z\in\R^n:H_i(z)\le0,\ i\in[m]\}
\]
is nonempty.
\end{assumption}

For the surrogate \eqref{eq:surrogate}, the required strong
convexity follows from its affine--quadratic construction;
the original smooth functions $f$ and $g_i$ need not be strongly
convex. This property will be used to establish a common
invariant ball and, together with the prescribed stepsizes
and weights, the $O(T^{-1})$ inner bound.

Under Assumption~\ref{ass:inner}, each $H_i$ is finite,
continuous, and strongly convex. Hence $\mathcal C$ is closed
and convex. Moreover, the strong subgradient inequality gives
a coercive quadratic lower bound for $H_0$. Since $\mathcal C$
is nonempty, $H_0$ attains its minimum over $\mathcal C$, and
strong convexity makes the minimizer unique. We denote it
by $z^*$.

We first record the proximal gradient inequality used to
analyze both the objective and constraint updates; see
\cite[Chapter~10]{Beck2017}.

\begin{lemma}\label{lem:strong-pg}
Let $h$ be $\mu$-strongly convex with an $L_h$-Lipschitz
gradient, let $R$ be finite convex, and let
\[
y=\prox_{\gamma R}(x-\gamma\nabla h(x)),
\qquad 0<\gamma\le L_h^{-1}.
\]
Then, for all $z\in\R^n$,
\begin{equation}\label{eq:strong-pg}
h(y)+R(y)
\le
h(z)+R(z)
+\frac{1-\gamma\mu}{2\gamma}\norm{x-z}^2
-\frac{1}{2\gamma}\norm{y-z}^2.
\end{equation}
\end{lemma}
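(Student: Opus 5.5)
The plan is to combine three ingredients: the optimality condition of the proximal step, the upper smoothness estimate of $h$ at $x$, and the strong convexity lower bound of $h$ around $x$.

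\textbf{Step 1: optimality condition of the prox.} By definition of $\prox_{\gamma R}$, the point $y$ minimizes
\[
R(u)+\frac1{2\gamma}\norm{u-x+\gamma\nabla h(x)}^2 .
\]
This objective is $\gamma^{-1}$-strongly convex in $u$. Hence, for every $z\in\R^n$,
\[
R(y)+\ip{\nabla h(x)}{y-x}+\frac1{2\gamma}\norm{y-x}^2
\le R(z)+\ip{\nabla h(x)}{z-x}+\frac1{2\gamma}\norm{z-x}^2-\frac1{2\gamma}\norm{z-y}^2 .
\]
To obtain this, I expand the squared norms and cancel the common constant $\frac{\gamma}{2}\norm{\nabla h(x)}^2$. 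The term $-\frac{1}{2\gamma}\norm{z-y}^2$ comes from the quadratic growth of a strongly convex function around its minimizer. Equivalently, one can use $(x-\gamma\nabla h(x)-y)/\gamma\in\partial R(y)$ together with the three-point identity.

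\textbf{Step 2: upper bound on $h(y)$.} Since $\gamma\le L_h^{-1}$, the upper estimate in \eqref{eq:two-sided-smoothness} gives
\[
h(y)\le h(x)+\ip{\nabla h(x)}{y-x}+\frac{L_h}{2}\norm{y-x}^2\le h(x)+\ip{\nabla h(x)}{y-x}+\frac1{2\gamma}\norm{y-x}^2 .
\]

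\textbf{Step 3: lower bound on $h(z)$.} The $\mu$-strong convexity of $h$ yields
\[
h(x)+\ip{\nabla h(x)}{z-x}\le h(z)-\frac{\mu}{2}\norm{z-x}^2 .
\]

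\textbf{Combining.} I add $h(x)$ to both sides of the Step 1 inequality. On the left, Step 2 bounds $h(y)+R(y)$ from above by that left-hand side. On the right, I replace $h(x)+\ip{\nabla h(x)}{z-x}$ using Step 3. The coefficient of $\norm{x-z}^2$ then becomes
\[
\frac1{2\gamma}-\frac{\mu}{2}=\frac{1-\gamma\mu}{2\gamma},
\]
and the remaining term is $-\frac1{2\gamma}\norm{y-z}^2$. This is exactly \eqref{eq:strong-pg}.

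The only delicate point is Step 1. The strong-convexity gain $-\frac1{2\gamma}\norm{y-z}^2$ must come from the prox subproblem itself, not be discarded. I would justify it cleanly via the subgradient inclusion
\[
(x-\gamma\nabla h(x)-y)/\gamma\in\partial R(y)
\]
and the identity
\[
2\ip{x-y}{z-y}=\norm{x-y}^2+\norm{z-y}^2-\norm{x-z}^2 .
\]
Everything else is routine bookkeeping.
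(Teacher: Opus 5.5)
Your proof is correct and follows essentially the same route as the paper: the proximal optimality condition (the paper uses the subgradient inclusion $r_y\in\partial R(y)$ with the three-point identity, which you note as your equivalent alternative), the descent lemma, and strong convexity of $h$ at $x$. The only cosmetic difference is that you replace $L_h/2$ by $1/(2\gamma)$ before combining, whereas the paper keeps $L_h/2$ and drops the nonpositive remainder $-\frac{1-\gamma L_h}{2\gamma}\norm{y-x}^2$ at the end.
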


\begin{proof}
The proximal optimality condition gives
$r_y:=-\nabla h(x)-\gamma^{-1}(y-x)\in\partial R(y)$.
Combining the descent lemma, the subgradient inequality for
$R$, and strong convexity of $h$, we obtain
\[
\begin{aligned}
h(y)+R(y)
&\le h(x)+\ip{\nabla h(x)}{y-x}
+\frac{L_h}{2}\norm{y-x}^2
+R(z)-\ip{r_y}{z-y}\\
&\le h(z)+R(z)-\frac{\mu}{2}\norm{z-x}^2
+\frac1\gamma\ip{y-x}{z-y}
+\frac{L_h}{2}\norm{y-x}^2\\
&=h(z)+R(z)
+\frac{1-\gamma\mu}{2\gamma}\norm{x-z}^2
-\frac{1}{2\gamma}\norm{y-z}^2\\
&\quad
-\frac{1-\gamma L_h}{2\gamma}\norm{y-x}^2.
\end{aligned}
\]
Here the equality uses
$2\ip{y-x}{z-y}
=\norm{x-z}^2-\norm{y-z}^2-\norm{y-x}^2$.
Since $\gamma L_h\le1$, dropping the last nonpositive term
proves \eqref{eq:strong-pg}.
\end{proof}

The preceding estimate applies to each component separately.
To control the switching sequence, we next introduce bounds
that are uniform over the selected component. For
$i\in[m]_0$ and $\gamma>0$, define
\[
T_{i,\gamma}(z)
:=\prox_{\gamma R_i}
\bigl(z-\gamma\nabla h_i(z)\bigr).
\]
Fix an initial point $z^0$ and choose
$r_i^0\in\partial R_i(z^0)$ for each $i\in[m]_0$;
such subgradients exist because each $R_i$ is finite convex.
Set
\[
\mu_{\min}:=\min_{i\in[m]_0}\mu_i,
\qquad
B_0:=\max_{i\in[m]_0}
\norm{\nabla h_i(z^0)+r_i^0},
\qquad
\gamma_{\min}:=\min_{i\in[m]_0}\frac{\mu_i}{L_i^2}.
\]
These constants determine a common radius and stepsize bound.
Specifically, choose $R>0$ satisfying
\begin{equation}\label{eq:invariant-radius}
R\ge\frac{2B_0}{\mu_{\min}},
\end{equation}
and, on the closed ball $\B(z^0,R)$, define the subgradient bound
\[
M_R:=
\max_{i\in[m]_0}
\sup_{\substack{z\in\B(z^0,R)\\r\in\partial R_i(z)}}
\norm{\nabla h_i(z)+r}.
\]
The constant $M_R$ is finite because the subdifferentials
of finite convex functions and the continuous gradients are
bounded on compact sets. We will show that the ball is invariant
under every component map when $\gamma\le\gamma_{\min}$;
thus $M_R$ will apply along the entire switching sequence.

We now specify the stepsizes and averaging weights.
Choose an integer $t_0\ge0$ sufficiently large and set
\begin{equation}\label{eq:decreasing-step}
\gamma_t:=\frac{2}{\mu_{\min}(t+t_0+2)},
\quad
w_t:=t+t_0+1,
\quad
\gamma_0=\frac{2}{\mu_{\min}(t_0+2)}
\le\gamma_{\min}.
\end{equation}
With these choices, Algorithm~\ref{alg:spg} takes an objective
step when all constraints are within tolerance $\tau$ and
otherwise takes a step for a most violated constraint.

\begin{algorithm}[H]
\caption{The switching proximal gradient method
\(\operatorname{SPG}(\tau,T,z^0;t_0,\{\gamma_t\})\)}
\label{alg:spg}
\begin{algorithmic}[1]
\Require $\tau>0$, an integer $T\ge1$, $z^0\in\R^n$,
and an integer $t_0\ge0$ with stepsizes
$\{\gamma_t\}_{t=0}^{T-1}$ and weights
$\{w_t\}_{t=0}^{T-1}$ given by~\eqref{eq:decreasing-step}.
\State Set $I\gets\emptyset$ and $J\gets\emptyset$.
\For{$t=0,1,\ldots,T-1$}
 \State Choose
 $\displaystyle i_t\in\argmax_{i\in[m]}H_i(z^t)$
 and set $V_t\gets H_{i_t}(z^t)$.
 \If{$V_t\le\tau$}
 \State
 $\displaystyle
 z^{t+1}\gets
 \prox_{\gamma_tR_0}
 \bigl(z^t-\gamma_t\nabla h_0(z^t)\bigr)$,
 \quad $I\gets I\cup\{t\}$.
 \Else
 \State
 $\displaystyle
 z^{t+1}\gets
 \prox_{\gamma_tR_{i_t}}
 \bigl(z^t-\gamma_t\nabla h_{i_t}(z^t)\bigr)$,
 \quad $J\gets J\cup\{t\}$.
 \EndIf
\EndFor
\If{$I=\emptyset$}
 \State \textbf{Return} failure
 (insufficient inner iteration budget).
\EndIf
\State \textbf{Return}
$\bar z_T:=
\frac{\sum_{t\in I}w_tz^t}{\sum_{t\in I}w_t}$.
\end{algorithmic}
\end{algorithm}

The sets $I$ and $J$ record objective and constraint steps,
respectively. In particular, every point $z^t$ with $t\in I$
satisfies $H_i(z^t)\le\tau$ for all $i\in[m]$, which motivates
averaging these points in the return step. The failure branch
accounts for budgets for which $I$ is empty. To derive a budget
that guarantees $I\ne\emptyset$, we first establish boundedness
of the iterates.

An update for one component may increase another, so individual
sublevel sets need not be preserved by switching. The following
lemma gives a ball invariant under every component update.

\begin{lemma}\label{lem:common-invariance}
If $0<\gamma\le\gamma_{\min}$, then
\[
T_{i,\gamma}\bigl(\B(z^0,R)\bigr)
\subseteq\B(z^0,R)
\quad\forall\,i\in[m]_0.
\]
\end{lemma}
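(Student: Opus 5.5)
The plan is to show that each component map $T_{i,\gamma}$ is a contraction about a point close to $z^0$. Two facts will do this. First, the forward step $z\mapsto z-\gamma\nabla h_i(z)$ contracts with a factor strictly below one when $\gamma\le\mu_i/L_i^2$. Second, $\prox_{\gamma R_i}$ is nonexpansive. The residual displacement of $z^0$ itself is then controlled by $B_0$, and \eqref{eq:invariant-radius} absorbs it.

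\textbf{Step 1 (contraction).} Fix $i\in[m]_0$ and $0<\gamma\le\gamma_{\min}\le\mu_i/L_i^2$. For $z,z'\in\R^n$, expand $\norm{(z-\gamma\nabla h_i(z))-(z'-\gamma\nabla h_i(z'))}^2$. Use strong monotonicity $\ip{\nabla h_i(z)-\nabla h_i(z')}{z-z'}\ge\mu_i\norm{z-z'}^2$ and the Lipschitz bound $\norm{\nabla h_i(z)-\nabla h_i(z')}\le L_i\norm{z-z'}$. This bounds the square by
\[
(1-2\gamma\mu_i+\gamma^2L_i^2)\norm{z-z'}^2\le(1-\gamma\mu_i)\norm{z-z'}^2,
\]
where the last inequality uses $\gamma L_i^2\le\mu_i$. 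Nonexpansiveness of $\prox_{\gamma R_i}$ then gives
\[
\norm{T_{i,\gamma}(z)-T_{i,\gamma}(z')}\le q\norm{z-z'},\qquad q:=\sqrt{1-\gamma\mu_i}.
\]

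\textbf{Step 2 (displacement of the center).} Since $r_i^0\in\partial R_i(z^0)$, the proximal optimality condition shows $z^0=\prox_{\gamma R_i}(z^0+\gamma r_i^0)$. Comparing this with $T_{i,\gamma}(z^0)=\prox_{\gamma R_i}(z^0-\gamma\nabla h_i(z^0))$ and using nonexpansiveness again yields
\[
\norm{T_{i,\gamma}(z^0)-z^0}\le\gamma\norm{\nabla h_i(z^0)+r_i^0}\le\gamma B_0.
\]

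\textbf{Step 3 (conclusion).} For $z\in\B(z^0,R)$, the triangle inequality and Steps 1–2 give
\[
\norm{T_{i,\gamma}(z)-z^0}\le qR+\gamma B_0.
\]
It remains to check that $\gamma B_0\le(1-q)R$. The elementary inequality $1-\sqrt{1-a}\ge a/2$ for $a\in[0,1]$ gives $(1-q)R\ge\gamma\mu_iR/2$. Then \eqref{eq:invariant-radius} and $\mu_i\ge\mu_{\min}$ give $\gamma\mu_iR/2\ge\gamma B_0$. Note that $a=\gamma\mu_i\le\mu_i^2/L_i^2\le1$, so the inequality applies.

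The only delicate point is Step 1: obtaining a contraction factor whose gap $1-q$ is of order $\gamma\mu_i$, so that it matches the $O(\gamma)$ displacement from Step 2. This is exactly what the restriction $\gamma\le\mu_i/L_i^2$ delivers. The remaining steps are routine.
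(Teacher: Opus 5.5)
Your proof is correct and follows the paper's argument. It uses the same contraction factor $\sqrt{1-\gamma\mu_i}\le 1-\gamma\mu_i/2$, the same bound $\norm{T_{i,\gamma}(z^0)-z^0}\le\gamma B_0$, and the same absorption of that displacement via \eqref{eq:invariant-radius}. The only difference is cosmetic: you get the center displacement from the fixed-point identity $z^0=\prox_{\gamma R_i}(z^0+\gamma r_i^0)$ and nonexpansiveness, whereas the paper derives it directly from monotonicity of $\partial R_i$ and Cauchy--Schwarz.
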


\begin{proof}
Fix $i\in[m]_0$ and $0<\gamma\le\gamma_{\min}$.
We first bound the contraction of $T_{i,\gamma}$.
Indeed, nonexpansiveness of $\prox_{\gamma R_i}$,
strong monotonicity, and Lipschitz continuity of
$\nabla h_i$ give
\begin{align}
\norm{T_{i,\gamma}(z)-T_{i,\gamma}(y)}^2
&\le
\norm{z-y-\gamma(\nabla h_i(z)-\nabla h_i(y))}^2
\nonumber\\
&\le
\bigl(1-2\gamma\mu_i+\gamma^2L_i^2\bigr)
\norm{z-y}^2
\nonumber\\
&\le
(1-\gamma\mu_i)\norm{z-y}^2.
\label{eq:component-contraction}
\end{align}
The last inequality uses $\gamma\le\mu_i/L_i^2$.
We next control the displacement of the center $z^0$.
Set $u:=T_{i,\gamma}(z^0)$. The proximal optimality condition
gives
$r_u:=-\nabla h_i(z^0)-\gamma^{-1}(u-z^0)\in\partial R_i(u)$.
Hence, by monotonicity of $\partial R_i$,
\[
0
\le
\ip{r_u-r_i^0}{u-z^0}
=
-\ip{\nabla h_i(z^0)+r_i^0}{u-z^0}
-\frac1\gamma\norm{u-z^0}^2.
\]
Cauchy--Schwarz therefore yields
\begin{equation}\label{eq:component}
\norm{T_{i,\gamma}(z^0)-z^0}
\le
\gamma\norm{\nabla h_i(z^0)+r_i^0}
\le\gamma B_0.
\end{equation}
Finally, $\gamma\mu_i\le\mu_i^2/L_i^2\le1$ implies
$\sqrt{1-\gamma\mu_i}\le1-\gamma\mu_i/2$.
Combining \eqref{eq:component-contraction}--\eqref{eq:component}
with \eqref{eq:invariant-radius}, we obtain, for every
$z\in\B(z^0,R)$,
\begin{align*}
\norm{T_{i,\gamma}(z)-z^0}
&\le
\norm{T_{i,\gamma}(z)-T_{i,\gamma}(z^0)}
+\norm{T_{i,\gamma}(z^0)-z^0}\\
&\le
\left(1-\frac{\gamma\mu_i}{2}\right)R+\gamma B_0\\
&\le R.
\end{align*}
Hence $T_{i,\gamma}(z)\in\B(z^0,R)$.
\end{proof}

Lemma~\ref{lem:common-invariance} allows us to use the same
bound $M_R$ at every iteration. We now combine this bound
with Lemma~\ref{lem:strong-pg} to estimate the selected
component at the current point $z^t$, where the switching
test is performed.

\begin{lemma}
\label{lem:inner-contraction}
Suppose that Assumption~\ref{ass:inner} holds, that $R$ satisfies
\eqref{eq:invariant-radius}, and that the stepsizes are given by
\eqref{eq:decreasing-step}. Define
\[
\ell_t:=
\begin{cases}
0, & t\in I,\\
i_t, & t\in J.
\end{cases}
\]
Then $z^t\in\B(z^0,R)$ for $t=0,\ldots,T$.
Moreover, for $t=0,\ldots,T-1$,
\begin{equation}\label{eq:current-contraction}
H_{\ell_t}(z^t)-H_{\ell_t}(z^*)
\le
\frac{1-\gamma_t\mu_{\min}}{2\gamma_t}
\norm{z^t-z^*}^2
-\frac1{2\gamma_t}\norm{z^{t+1}-z^*}^2
+M_R^2\gamma_t.
\end{equation}
\end{lemma}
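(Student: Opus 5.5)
The plan is to prove the two claims in order: first ball membership by induction, then the per-step estimate \eqref{eq:current-contraction} by combining Lemma~\ref{lem:strong-pg} with a one-step displacement bound.

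\textbf{Ball membership.} The stepsizes in \eqref{eq:decreasing-step} decrease in $t$, so $\gamma_t\le\gamma_0\le\gamma_{\min}$ for every $t$. I argue by induction on $t$. The base case $z^0\in\B(z^0,R)$ is trivial. For the step, $z^{t+1}=T_{\ell_t,\gamma_t}(z^t)$ with $\ell_t\in[m]_0$, and Lemma~\ref{lem:common-invariance} gives $z^{t+1}\in\B(z^0,R)$ whenever $z^t\in\B(z^0,R)$.

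\textbf{Applying Lemma~\ref{lem:strong-pg}.} Fix $t$ and write $i=\ell_t$ and $\gamma=\gamma_t$. To use the lemma I need $\gamma\le L_i^{-1}$. This holds because $\gamma\le\gamma_{\min}\le\mu_i/L_i^2\le 1/L_i$, using $\mu_i\le L_i$. Take $h=h_i$, $R=R_i$, $x=z^t$, $y=z^{t+1}$ and $z=z^*$. This gives
\[
H_i(z^{t+1})\le H_i(z^*)+\frac{1-\gamma\mu_i}{2\gamma}\norm{z^t-z^*}^2-\frac1{2\gamma}\norm{z^{t+1}-z^*}^2 .
\]
Since $\mu_i\ge\mu_{\min}$, the factor $1-\gamma\mu_i$ may be replaced by $1-\gamma\mu_{\min}$. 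This is legitimate because $\gamma\mu_{\min}\le1$, so the coefficient stays nonnegative and the inequality is only weakened. It remains to pass from $H_i(z^{t+1})$ to $H_i(z^t)$ at a cost of at most $M_R^2\gamma$.

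\textbf{Passing from $z^{t+1}$ to $z^t$.} Pick any $r\in\partial R_i(z^t)$. Convexity of $H_i$ gives
\[
H_i(z^t)-H_i(z^{t+1})\le\ip{\nabla h_i(z^t)+r}{z^t-z^{t+1}}\le\norm{\nabla h_i(z^t)+r}\,\norm{z^{t+1}-z^t}.
\]
For the displacement, I repeat the monotonicity argument that gave \eqref{eq:component}, now centered at $z^t$ with the subgradient $r$ instead of $r_i^0$. This yields $\norm{T_{i,\gamma}(z^t)-z^t}\le\gamma\norm{\nabla h_i(z^t)+r}$. Because $z^t\in\B(z^0,R)$, both norms are at most $M_R$, so $H_i(z^t)-H_i(z^{t+1})\le M_R^2\gamma_t$. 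Adding this to the bound from Lemma~\ref{lem:strong-pg} proves \eqref{eq:current-contraction}.

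The only delicate point is making sure the displacement bound is applied at the current point $z^t$ and not at $z^0$. This is what lets the ball invariance supply the uniform constant $M_R$ along the whole switching sequence. Everything else follows directly from the earlier lemmas.
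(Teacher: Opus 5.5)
Your proposal is correct and matches the paper's proof step for step. You get ball membership by induction from Lemma~\ref{lem:common-invariance} with $\gamma_t\le\gamma_0\le\gamma_{\min}$, apply Lemma~\ref{lem:strong-pg} to the selected component with comparison point $z^*$, and pay $M_R^2\gamma_t$ to move from $z^{t+1}$ back to $z^t$ via the monotonicity bound $\norm{z^{t+1}-z^t}\le M_R\gamma_t$ and the subgradient inequality for $H_{\ell_t}$. (The nonnegativity of $1-\gamma\mu_{\min}$ is harmless but unnecessary, since $\mu_{\ell_t}\ge\mu_{\min}$ alone makes the replacement a weakening.)
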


\begin{proof}
Since $\gamma_t\le\gamma_0\le\gamma_{\min}$,
Lemma~\ref{lem:common-invariance} and induction give
$z^t\in\B(z^0,R)$ for $t=0,\ldots,T$.

Fix $t<T$. To transfer the proximal gradient estimate from
$z^{t+1}$ to $z^t$, we first bound the step length.
The proximal optimality condition gives
\[
r^{t+1}:=
-\nabla h_{\ell_t}(z^t)
-\gamma_t^{-1}(z^{t+1}-z^t)
\in\partial R_{\ell_t}(z^{t+1}).
\]
For any $r^t\in\partial R_{\ell_t}(z^t)$,
monotonicity of $\partial R_{\ell_t}$ therefore implies
\[
\gamma_t^{-1}\norm{z^{t+1}-z^t}^2
\le
-\ip{\nabla h_{\ell_t}(z^t)+r^t}{z^{t+1}-z^t}.
\]
Since $z^t\in\B(z^0,R)$, Cauchy--Schwarz and the definition
of $M_R$ yield
\begin{equation}\label{eq:uniform-inner-step}
\norm{z^{t+1}-z^t}\le M_R\gamma_t.
\end{equation}
Moreover,
$\nabla h_{\ell_t}(z^t)+r^t\in\partial H_{\ell_t}(z^t)$.
Thus the subgradient inequality and
\eqref{eq:uniform-inner-step} give
\begin{equation}\label{eq:selected-component-change}
\begin{aligned}
H_{\ell_t}(z^t)-H_{\ell_t}(z^{t+1})
&\le
\ip{\nabla h_{\ell_t}(z^t)+r^t}{z^t-z^{t+1}}\\
&\le
M_R\norm{z^{t+1}-z^t}
\le M_R^2\gamma_t.
\end{aligned}
\end{equation}
On the other hand,
$\gamma_t\le\gamma_{\min}\le
\mu_{\ell_t}/L_{\ell_t}^2\le1/L_{\ell_t}$.
Applying Lemma~\ref{lem:strong-pg} to the selected component
with comparison point $z^*$, we obtain
\[
H_{\ell_t}(z^{t+1})-H_{\ell_t}(z^*)
\le
\frac{1-\gamma_t\mu_{\ell_t}}{2\gamma_t}
\norm{z^t-z^*}^2
-\frac1{2\gamma_t}\norm{z^{t+1}-z^*}^2.
\]
Adding \eqref{eq:selected-component-change} and using
$\mu_{\ell_t}\ge\mu_{\min}$ proves
\eqref{eq:current-contraction}.
\end{proof}

We now derive the iteration complexity of SPG for \eqref{gccp}.
The reciprocal stepsizes and linearly increasing weights follow the
weighted-telescoping approach of
\cite{LacosteJulienSchmidtBach2012}. Here the additional ingredients
are the common invariant ball and the separation of objective and
constraint updates. Multiplying \eqref{eq:current-contraction} by $w_t$ makes
the distance terms telescope. The resulting error bound grows
at most linearly in $T$, whereas the total weight grows
quadratically. The switching rule then guarantees an objective
step within the prescribed budget and yields the following
certificate.

\begin{theorem}[Complexity of SPG]
\label{thm:inner}
Suppose that $\tau>0$ and that the assumptions of
Lemma~\ref{lem:inner-contraction} hold, and define
\[
A_0:=
\frac{\mu_{\min}}4t_0(t_0+1)\norm{z^0-z^*}^2.
\]
If
\begin{equation}\label{eq:inner-budget}
T\ge
\left\lceil
\max\left\{
1,\,
2\sqrt{\frac{A_0}{\tau}},\,
\frac{8M_R^2}{\mu_{\min}\tau}
\right\}
\right\rceil,
\end{equation}
then $I\ne\emptyset$, and the output satisfies
\begin{equation}\label{eq:inner-certificate}
\max_{i\in[m]}H_i(\bar z_T)\le\tau,
\qquad
H_0(\bar z_T)-H_0(z^*)\le\tau.
\end{equation}
Consequently, choosing $T$ equal to the ceiling in
\eqref{eq:inner-budget} gives $O(\tau^{-1})$ selected component
proximal gradient updates as $\tau\downarrow0$, with the
problem data, $z^0$, $R$, and $t_0$ fixed. If a full scan
identifies a most violated constraint, the number of
constraint-value evaluations is $O(m\tau^{-1})$.
\end{theorem}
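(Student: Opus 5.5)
The plan is to take the weighted sum of the per-iteration estimate \eqref{eq:current-contraction} from Lemma~\ref{lem:inner-contraction} with weights $w_t$, show that the distance terms telescope, and then read off $I\ne\emptyset$ and the certificate from how the switching rule signs each summand.

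\textbf{Step 1 (telescoping).} I first check the coefficient identity that makes the sum collapse. With $\gamma_t=2/(\mu_{\min}(t+t_0+2))$ and $w_t=t+t_0+1$,
\[
\frac{w_t}{2\gamma_t}=\frac{\mu_{\min}}{4}(t+t_0+1)(t+t_0+2),
\qquad
\frac{w_t(1-\gamma_t\mu_{\min})}{2\gamma_t}=\frac{\mu_{\min}}{4}(t+t_0)(t+t_0+1).
\]
So the positive coefficient of $\norm{z^t-z^*}^2$ at step $t$ equals the negative coefficient of $\norm{z^t-z^*}^2$ coming from step $t-1$. Multiplying \eqref{eq:current-contraction} by $w_t$ and summing over $t=0,\ldots,T-1$ therefore leaves only the $t=0$ distance term, which is exactly $A_0$, and a final nonpositive term that I drop. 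Since $w_t\gamma_t\le 2/\mu_{\min}$, the error terms sum to at most $2M_R^2T/\mu_{\min}$. This gives
\[
\sum_{t=0}^{T-1}w_t\bigl(H_{\ell_t}(z^t)-H_{\ell_t}(z^*)\bigr)\le A_0+\frac{2M_R^2T}{\mu_{\min}}.
\]
I expect this identity to be the only delicate computation; everything after it is bookkeeping.

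\textbf{Step 2 ($I\ne\emptyset$).} I split the sum over $I$ and $J$. For $t\in J$, the switching rule gives $H_{i_t}(z^t)=V_t>\tau$, and $H_{i_t}(z^*)\le0$ because $z^*\in\mathcal C$. Hence every $J$-summand exceeds $\tau w_t$. Next, $\sum_{t<T}w_t\ge T(T+1)/2>T^2/2$. The budget \eqref{eq:inner-budget} yields $\tau T^2/4\ge A_0$ from $T\ge 2\sqrt{A_0/\tau}$, and $\tau T^2/4\ge 2M_R^2T/\mu_{\min}$ from $T\ge 8M_R^2/(\mu_{\min}\tau)$. Adding, the right-hand side of Step 1 is at most $\tau T^2/2<\tau\sum_t w_t$. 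If $I$ were empty, the left-hand side would be strictly larger than $\tau\sum_t w_t$, which is a contradiction.

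\textbf{Step 3 (certificate).} Moving the $J$-part to the right gives
\[
\sum_{t\in I}w_t\bigl(H_0(z^t)-H_0(z^*)\bigr)\le \tau\sum_{t}w_t-\tau\sum_{t\in J}w_t=\tau\sum_{t\in I}w_t.
\]
Convexity of $H_0$ (Jensen's inequality for the weighted average $\bar z_T$) then gives $H_0(\bar z_T)-H_0(z^*)\le\tau$. For each $i\in[m]$, every $t\in I$ satisfies $H_i(z^t)\le V_t\le\tau$, and convexity of $H_i$ yields $H_i(\bar z_T)\le\tau$. The $O(\tau^{-1})$ count is immediate from the ceiling in \eqref{eq:inner-budget}, since $A_0$ and $M_R$ do not depend on $\tau$. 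Multiplying by the $m$ constraint evaluations per iteration used to find $i_t$ gives the $O(m\tau^{-1})$ bound on constraint-value evaluations.
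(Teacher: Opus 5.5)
Your proposal is correct and follows the paper's proof essentially step for step. It uses the same weighted telescoping of \eqref{eq:current-contraction} with $w_t\gamma_t\le 2/\mu_{\min}$, the same contradiction on $J$ via $H_{i_t}(z^t)-H_{i_t}(z^*)>\tau$ to get $I\ne\emptyset$, and the same subtraction of the $J$-part followed by Jensen's inequality. The only cosmetic difference is that you obtain strictness from $W_T\ge T(T+1)/2>T^2/2$, whereas the paper uses $W_T\ge T^2/2$ and takes strictness from the constraint-step gap; either works.
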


\begin{proof}
We first derive a weighted bound over all iterations.
Substituting the stepsize in \eqref{eq:decreasing-step}
into \eqref{eq:current-contraction} gives
\begin{align*}
H_{\ell_t}(z^t)-H_{\ell_t}(z^*)
&\le
\frac{\mu_{\min}}4
(t+t_0)\norm{z^t-z^*}^2
-\frac{\mu_{\min}}4
(t+t_0+2)\norm{z^{t+1}-z^*}^2
\nonumber\\
&\quad+
\frac{2M_R^2}{\mu_{\min}(t+t_0+2)}.
%\label{eq:unweighted-recursion}
\end{align*}
Multiplying by $w_t=t+t_0+1$, we obtain
\begin{align*}
w_t\bigl(H_{\ell_t}(z^t)-H_{\ell_t}(z^*)\bigr)
\le{}&
\frac{\mu_{\min}}4
(t+t_0)(t+t_0+1)\norm{z^t-z^*}^2\\
&-\frac{\mu_{\min}}4
(t+t_0+1)(t+t_0+2)\norm{z^{t+1}-z^*}^2\\
&+\frac{2M_R^2}{\mu_{\min}}
\frac{t+t_0+1}{t+t_0+2}.
\end{align*}
The distance coefficients agree at consecutive iterations.
Summing over $t=0,\ldots,T-1$ therefore yields
\begin{align}
\sum_{t=0}^{T-1}w_t
\bigl(H_{\ell_t}(z^t)-H_{\ell_t}(z^*)\bigr)
&\le
A_0-\frac{\mu_{\min}}4
(T+t_0)(T+t_0+1)\norm{z^T-z^*}^2
\nonumber\\
&\quad+
\frac{2M_R^2}{\mu_{\min}}
\sum_{t=0}^{T-1}\frac{t+t_0+1}{t+t_0+2}
\nonumber\\
&\le
A_0+\frac{2M_R^2}{\mu_{\min}}T,
\label{eq:global-weighted-bound}
\end{align}
where the last inequality drops the nonpositive terminal
term and uses $(t+t_0+1)/(t+t_0+2)\le1$. Moreover, the
total weight satisfies
\begin{equation*}\label{eq:total-weight}
W_T:=\sum_{t=0}^{T-1}w_t
=\frac{T(T+2t_0+1)}2
\ge\frac{T^2}2.
\end{equation*}
Consequently, \eqref{eq:global-weighted-bound} gives
\begin{equation*}\label{eq:normalized-inner-bound}
\frac1{W_T}
\sum_{t=0}^{T-1}w_t
\bigl(H_{\ell_t}(z^t)-H_{\ell_t}(z^*)\bigr)
\le
\frac{2A_0}{T^2}
+\frac{4M_R^2}{\mu_{\min}T}.
\end{equation*}
The budget \eqref{eq:inner-budget} bounds each term on the
right-hand side by $\tau/2$. Thus
\begin{equation}\label{eq:weighted-budget}
\sum_{t=0}^{T-1}w_t
\bigl(H_{\ell_t}(z^t)-H_{\ell_t}(z^*)\bigr)
\le\tau W_T.
\end{equation}

We next show that $I\ne\emptyset$. For every $t\in J$,
the switching rule gives $H_{i_t}(z^t)>\tau$, while
feasibility of $z^*$ gives $H_{i_t}(z^*)\le0$. Hence
\begin{equation}\label{eq:constraint-step-gap}
H_{i_t}(z^t)-H_{i_t}(z^*)>\tau.
\end{equation}
If $I=\emptyset$, this strict inequality holds at every
iteration, and summation gives
\[
\sum_{t=0}^{T-1}w_t
\bigl(H_{\ell_t}(z^t)-H_{\ell_t}(z^*)\bigr)
>\tau W_T,
\]
contradicting \eqref{eq:weighted-budget}. Therefore
$I\ne\emptyset$, and the returned average is well defined.

It remains to bound its objective value and constraint
violation. Let
\[
W_I:=\sum_{t\in I}w_t,
\qquad
W_J:=\sum_{t\in J}w_t.
\]
Since $W_T=W_I+W_J$, subtracting the constraint-step
contribution in \eqref{eq:weighted-budget} and using
\eqref{eq:constraint-step-gap} yield
\[
\sum_{t\in I}w_t\bigl(H_0(z^t)-H_0(z^*)\bigr)
\le\tau W_T-\tau W_J=\tau W_I.
\]
Convexity of $H_0$ now implies
\[
H_0(\bar z_T)-H_0(z^*)
\le
\frac1{W_I}\sum_{t\in I}w_t
\bigl(H_0(z^t)-H_0(z^*)\bigr)
\le\tau.
\]
Similarly, $H_i(z^t)\le\tau$ for every $t\in I$ and
$i\in[m]$. By convexity of each $H_i$,
\[
H_i(\bar z_T)
\le\frac1{W_I}\sum_{t\in I}w_tH_i(z^t)
\le\tau,
\qquad i\in[m].
\]
This proves \eqref{eq:inner-certificate}.

Finally, with the problem data, $z^0$, $R$, and $t_0$ fixed,
the two accuracy-dependent terms in \eqref{eq:inner-budget}
are $O(\tau^{-1/2})$ and $O(\tau^{-1})$. The prescribed
budget is therefore $O(\tau^{-1})$ as $\tau\downarrow0$.
Each iteration performs one selected component
proximal gradient update and, under a full scan, evaluates
$m$ constraint values. The stated complexity bounds follow.
\end{proof}

Theorem~\ref{thm:inner} provides the objective and feasibility
certificate required by the outer majorization method.
To turn its theoretical budget into a uniform choice for
the DC surrogates, we still need to control
$\norm{z^0-z^*}$ in $A_0$ and the local bound $M_R$.
We derive bounds independent of the outer iteration next
and use them to choose a common inner budget.

% MGSPG subsection: clean replacement text for the original manuscript.
\subsection{Majorization-Guided Switching Proximal-Gradient Method}
%\label{subsec:mgspg}

We now apply SPG to the DC surrogate \eqref{eq:surrogate}.
For a center $x^k\in\Omega_0$, \eqref{eq:center-equality}
gives $x^k\in\cS_k$. Moreover, the smooth components
\[
h_0^k:=\Phi_k-P_1,
\qquad
h_i^k:=\Psi_{i,k}-Q_{i1},\quad i\in[m],
\]
have affine gradients, with both smoothness and strong-convexity
moduli equal to $\mu_f$ for $h_0^k$ and $\mu_{g_i}$ for $h_i^k$.
Together with the retained convex terms $P_1$ and $Q_{i1}$,
the assumed component oracles, and feasibility of $x^k$, these
properties verify Assumption~\ref{ass:inner}. Set
\[
\hat\mu_{\min}:=
\min\{\mu_f,\mu_{g_1},\ldots,\mu_{g_m}\},
\qquad
\hat\mu_{\max}:=
\max\{\mu_f,\mu_{g_1},\ldots,\mu_{g_m}\}.
\]
Thus the generic inner constants satisfy
\[
\mu_{\min}=\hat\mu_{\min},
\qquad
\max_{i\in[m]_0}L_i=\hat\mu_{\max},
\qquad
\gamma_{\min}=\frac1{\hat\mu_{\max}}.
\]

We now  majorization guided
switching proximal gradient method (MGSPG). At each iteration,
SPG produces a candidate point. If its displacement from the
current center is small, MGSPG returns the current center;
otherwise, feasibility and descent safeguards determine whether
the candidate becomes the next center. We retain these safeguards
for arbitrary inner tolerances and show below that the calibration
\eqref{eq:mgspg-calibration} guarantees acceptance whenever
the stopping test fails.

\begin{algorithm}[H]
\caption{Majorization Guided Switching Proximal Gradient Method (MGSPG)}
\label{alg:mgspg}
\begin{algorithmic}[1]
\Require
Initial feasible point $x^0$, inner tolerance $\tau>0$,
integer $T\ge1$, stopping threshold $\eta>0$, and integer
$t_0\ge0$ such that
$\gamma_0=2/[\hat\mu_{\min}(t_0+2)]\le1/\hat\mu_{\max}$,
with $\gamma_t:=2/[\hat\mu_{\min}(t+t_0+2)]$.
\For{$k=0,1,\ldots$}
    \State Choose $\xi^k\in\partial P_2(x^k)$ and set
    $\zeta_i^k=\zeta_i(x^k)$ for all $i\in[m]$ using the fixed
    joint selection mapping preceding Assumption~\ref{ass:slater}.
    \State Construct surrogates $\Phi_k,\Psi_{1,k},\ldots,\Psi_{m,k}$.
    \State Set
    $x^{k+1}:=\operatorname{SPG}(\tau,T,x^k;t_0,\{\gamma_t\})$
    applied to~\eqref{eq:surrogate}.
    \If{$\norm{x^{k+1}-x^k}\le\eta$}
        \State \textbf{Terminate and return} $x^k$.
    \EndIf
    \If{$\max_{i\in[m]}G_i(x^{k+1})>0$ \textbf{or}
    $F(x^{k+1})>F(x^k)$}
        \State \textbf{Terminate and report}
        ``insufficient inner accuracy''.
    \EndIf
\EndFor
\end{algorithmic}
\end{algorithm}

At a feasible center, the inner call is well defined for every
integer $T\ge1$. Indeed, \eqref{eq:center-equality} gives
$\max_i\Psi_{i,k}(x^k)\le0<\tau$, so the first SPG iteration
is an objective step. Hence $0\in I$, and the returned average
is defined. The safeguards then either terminate MGSPG or
accept the candidate as the next center. In particular, they
preserve the level set used throughout the analysis.

\begin{remark}%\label{rem:level-set-invariance}
Every outer center belongs to $\Omega_0$. To see this, suppose
that $x^k\in\Omega_0$ and the algorithm continues. The safeguards
give $G_i(x^{k+1})\le0$ for all $i\in[m]$ and
$F(x^{k+1})\le F(x^k)\le F(x^0)$, so $x^{k+1}\in\Omega_0$.
Since $x^0\in\Omega_0$, induction proves the claim. A rejected
candidate is not used as a new center, and the small-displacement
branch returns the current feasible center $x^k$.
\end{remark}

We next derive bounds uniform over the outer centers and the
admissible linearizations. The argument has three steps:
we first localize the surrogate feasible sets and their exact
solutions, then bound the associated KKT multipliers, and
finally choose a common SPG budget. These bounds will support
the outer alternative between returning an approximate KKT
point and taking a feasible descent step.

For the localization argument, compactness of $\Omega_0$,
continuity of the gradients, and local boundedness of the
subdifferentials give a constant $M_{\rm lev}\ge1$ such that
\[
\begin{aligned}
\norm{\nabla f(x)}+\norm{p_1}+\norm{p_2}
&\le M_{\rm lev},\\
\norm{\nabla g_i(x)}+\norm{q_{i1}}+\norm{q_{i2}}
&\le M_{\rm lev},\qquad i\in[m],
\end{aligned}
\]
for all $x,y_1,y_2\in\Omega_0$,
$p_\ell\in\partial P_\ell(y_\ell)$, and
$q_{i\ell}\in\partial Q_{i\ell}(y_\ell)$,
$\ell\in\{1,2\}$. Set
\[
M_{\rm slack}:=
\max_{\substack{x\in\Omega_0\\i\in[m]}}[-G_i(x)],
\qquad
R_S:=
\frac{M_{\rm lev}+
\sqrt{M_{\rm lev}^2+2\hat\mu_{\min}M_{\rm slack}}}
{\hat\mu_{\min}}.
\]
Both constants are finite and independent of the outer index.
Moreover, feasibility on $\Omega_0$ gives $M_{\rm slack}\ge0$,
and hence $R_S\ge2M_{\rm lev}/\hat\mu_{\min}>0$.
The next lemma explains the role of this radius.

\begin{lemma}[Localization and exact surrogate descent]
\label{lem:surrogate-localization}
The constant $R_S$ defined above is independent of $k$, and,
for every $x^k\in\Omega_0$,
\begin{equation}\label{eq:S-localization}
\cS_k\subseteq\B(x^k,R_S),
\qquad
\operatorname{diam}(\cS_k)\le2R_S.
\end{equation}
Moreover, problem~\eqref{eq:surrogate} has a unique solution
$\hat{x}^{k+1}\in\Omega_0$, and
\begin{equation}\label{eq:strong-surrogate-descent}
\Phi_k(\hat{x}^{k+1})
\le
F(x^k)-\frac{\mu_f}{2}
\norm{\hat{x}^{k+1}-x^k}^2.
\end{equation}
\end{lemma}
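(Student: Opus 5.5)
Let $x^k\in\Omega_0$ and $z\in\cS_k$. The plan is to prove localization first, from a single constraint, and then read off existence, uniqueness, membership in $\Omega_0$ and the descent inequality.

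\emph{Localization.} Since $m\ge1$, we fix one index $i$ and bound $\Psi_{i,k}(z)$ from below. Take any $q\in\partial Q_{i1}(x^k)$; the subgradient inequality gives $Q_{i1}(z)\ge Q_{i1}(x^k)+\ip{q}{z-x^k}$. Substituting this into the definition of $\Psi_{i,k}$ and using $g_i(x^k)+Q_{i1}(x^k)-Q_{i2}(x^k)=G_i(x^k)$, we get
\[
0\ge\Psi_{i,k}(z)\ge G_i(x^k)+\ip{\nabla g_i(x^k)-\zeta_i^k+q}{z-x^k}+\frac{\mu_{g_i}}2\norm{z-x^k}^2 .
\]
Every subgradient here is taken at $x^k\in\Omega_0$, so the definition of $M_{\rm lev}$ bounds $\norm{\nabla g_i(x^k)-\zeta_i^k+q}$ by $M_{\rm lev}$. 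We also have $G_i(x^k)\ge-M_{\rm slack}$ and $\mu_{g_i}\ge\hat\mu_{\min}$. Writing $s:=\norm{z-x^k}$, this yields
\[
\frac{\hat\mu_{\min}}2s^2-M_{\rm lev}s-M_{\rm slack}\le0 .
\]
Hence $s$ is at most the positive root, which is exactly $R_S$. This gives $\cS_k\subseteq\B(x^k,R_S)$, and the diameter bound $\operatorname{diam}(\cS_k)\le 2R_S$ follows from the triangle inequality. The constants $M_{\rm lev}$, $M_{\rm slack}$ and $\hat\mu_{\min}$ do not involve $k$, so $R_S$ is independent of $k$.

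\emph{Existence and uniqueness.} The set $\cS_k$ is nonempty because it contains $x^k$. It is closed and convex, since each $\Psi_{i,k}$ is finite, continuous and convex, and it is bounded by the localization step, hence compact. The objective $\Phi_k$ is continuous and $\mu_f$-strongly convex. It therefore attains a unique minimizer $\hat x^{k+1}$ on $\cS_k$; this is the same argument as for $z^*$ in the generic setting of Assumption~\ref{ass:inner}.

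\emph{Membership in $\Omega_0$.} Majorization gives $\cS_k\subseteq\cF$, so $\hat x^{k+1}$ is feasible. Since $x^k\in\cS_k$, optimality and \eqref{eq:center-equality} give
\[
F(\hat x^{k+1})\le\Phi_k(\hat x^{k+1})\le\Phi_k(x^k)=F(x^k)\le F(x^0).
\]
Thus $\hat x^{k+1}\in\Omega_0$.

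\emph{Descent inequality.} This is the only step needing care. We use the quadratic growth of a strongly convex function at its minimizer over a convex set. Optimality gives $0\in\partial(\Phi_k+\delta_{\cS_k})(\hat x^{k+1})$, and the sum rule applies because $\Phi_k$ is finite everywhere. The function $\Phi_k+\delta_{\cS_k}$ is $\mu_f$-strongly convex, so its subgradient inequality with the zero subgradient, evaluated at $x^k\in\cS_k$, reads
\[
\Phi_k(x^k)\ge\Phi_k(\hat x^{k+1})+\frac{\mu_f}2\norm{x^k-\hat x^{k+1}}^2 .
\]
Using $\Phi_k(x^k)=F(x^k)$ gives \eqref{eq:strong-surrogate-descent}. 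If one prefers to avoid subdifferential calculus, the same inequality follows from an elementary argument: apply strong convexity along the segment joining $\hat x^{k+1}$ and $x^k$, which stays in the convex set $\cS_k$, use optimality of $\hat x^{k+1}$ at each point of the segment, and let the parameter tend to $0$.
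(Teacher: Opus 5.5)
Your proposal is correct and follows essentially the same route as the paper. It uses the same one-constraint quadratic lower bound whose positive root is $R_S$, and the same compactness-plus-strong-convexity argument for existence and uniqueness of $\hat x^{k+1}$. The descent estimate is also the same strong-convexity growth inequality at the constrained minimizer. The only cosmetic difference is that you apply the strong subgradient inequality directly to $\Phi_k+\delta_{\cS_k}$ with the zero subgradient. The paper instead splits $0\in\partial\Phi_k(\hat x^{k+1})+N_{\cS_k}(\hat x^{k+1})$ and combines strong convexity of $\Phi_k$ with the normal-cone inequality at $x^k$; the two arguments are equivalent.
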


\begin{proof}
Fix $x^k\in\Omega_0$. We first prove \eqref{eq:S-localization}.
Let $v\in\cS_k$, fix $i\in[m]$, and choose
$\hat q_{i1}^k\in\partial Q_{i1}(x^k)$.
The subgradient inequality for $Q_{i1}$ and the definition of
$\Psi_{i,k}$ give
\begin{align*}
\Psi_{i,k}(v)
&\ge
G_i(x^k)
+\ip{\nabla g_i(x^k)+\hat q_{i1}^k-\zeta_i^k}{v-x^k}
+\frac{\mu_{g_i}}2\norm{v-x^k}^2\\
&\ge
-M_{\rm slack}-M_{\rm lev}\norm{v-x^k}
+\frac{\hat\mu_{\min}}2\norm{v-x^k}^2,
\end{align*}
where the second inequality uses Cauchy--Schwarz, the definitions
of $M_{\rm slack}$ and $M_{\rm lev}$, and
$\mu_{g_i}\ge\hat\mu_{\min}$. Since $\Psi_{i,k}(v)\le0$,
$r:=\norm{v-x^k}$ satisfies
\[
\frac{\hat\mu_{\min}}2r^2-M_{\rm lev}r-M_{\rm slack}\le0.
\]
The positive root of this quadratic is $R_S$, while its other
root is nonpositive. Thus $r\le R_S$, which proves the inclusion
in \eqref{eq:S-localization}; the diameter bound follows by
the triangle inequality.

We next consider the exact surrogate solution. By
\eqref{eq:center-equality}, $x^k\in\cS_k$, so $\cS_k$ is
nonempty. Each $\Psi_{i,k}$ is finite, continuous, and convex;
hence $\cS_k$ is closed and convex. Together with
\eqref{eq:S-localization}, this shows that $\cS_k$ is compact.
Since $\Phi_k$ is continuous and $\mu_f$-strongly convex,
it has a unique minimizer over $\cS_k$, denoted by
$\hat{x}^{k+1}$.

Moreover, optimality of $\hat{x}^{k+1}$, the center equalities,
and the majorization inequalities yield
\begin{align*}
F(\hat{x}^{k+1})
+\frac{\rho_f}{2}\norm{\hat{x}^{k+1}-x^k}^2
&\le\Phi_k(\hat{x}^{k+1})
\le\Phi_k(x^k)=F(x^k),\\
G_i(\hat{x}^{k+1})
+\frac{\rho_{g_i}}2\norm{\hat{x}^{k+1}-x^k}^2
&\le\Psi_{i,k}(\hat{x}^{k+1})\le0,
\qquad i\in[m].
\end{align*}
Consequently, $\hat{x}^{k+1}$ is feasible for the original
problem and satisfies $F(\hat{x}^{k+1})\le F(x^k)\le F(x^0)$.
Thus $\hat{x}^{k+1}\in\Omega_0$.

Finally, we prove \eqref{eq:strong-surrogate-descent}.
Since $\Phi_k$ is finite and continuous, the exact convex sum
rule and constrained optimality give
\[
0\in\partial\Phi_k(\hat{x}^{k+1})
+N_{\cS_k}(\hat{x}^{k+1}).
\]
Choose $a_k\in\partial\Phi_k(\hat{x}^{k+1})$ such that
$-a_k\in N_{\cS_k}(\hat{x}^{k+1})$. Because $x^k\in\cS_k$,
the normal-cone inequality gives
$\ip{a_k}{x^k-\hat{x}^{k+1}}\ge0$.
Strong convexity of $\Phi_k$ therefore implies
\begin{align*}
\Phi_k(x^k)
&\ge
\Phi_k(\hat{x}^{k+1})
+\ip{a_k}{x^k-\hat{x}^{k+1}}
+\frac{\mu_f}{2}\norm{\hat{x}^{k+1}-x^k}^2\\
&\ge
\Phi_k(\hat{x}^{k+1})
+\frac{\mu_f}{2}\norm{\hat{x}^{k+1}-x^k}^2.
\end{align*}
Using $\Phi_k(x^k)=F(x^k)$ proves
\eqref{eq:strong-surrogate-descent}.
\end{proof}

The exact surrogate solution serves as an analytical reference
for the computed SPG point. To use its KKT system uniformly
over the outer iterations, we next bound its multipliers.
The localization just established and the uniform Slater margin
together provide the required bound.

\begin{proposition}[Uniform surrogate multiplier bound]
\label{prop:multiplier-bound}
Suppose that Assumptions~\ref{ass:data} and \ref{ass:slater}
hold. For every $x^k\in\Omega_0$, the surrogate problem admits
a KKT multiplier. Every such multiplier $\lambda_k\in\R_+^m$
can be paired with
$p_1^k\in\partial P_1(\hat{x}^{k+1})$ and
$q_{i1}^k\in\partial Q_{i1}(\hat{x}^{k+1})$, $i\in[m]$,
so that the stationarity, feasibility, and complementarity conditions
for \eqref{eq:surrogate} hold.
Moreover, there is a constant $\Lambda\ge0$, independent of
$k$, such that every such multiplier satisfies
\begin{equation}\label{eq:Mlambda}
\|\lambda_k\|_1\le\Lambda.
\end{equation}
\end{proposition}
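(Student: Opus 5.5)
Existence of a multiplier comes from Rockafellar's convex duality. By Lemma~\ref{lem:surrogate-localization}, the surrogate~\eqref{eq:surrogate} has the unique minimizer $\hat x^{k+1}$. Its objective and constraints are finite convex functions on $\R^n$. Assumption~\ref{ass:slater} at the center $x=x^k\in\Omega_0$ gives a Slater point $s_{x^k}$ with $\Psi_{i,k}(s_{x^k})\le-\sigma_S<0$; this uses the fact that $\zeta_i^k=\zeta_i(x^k)$ is exactly the prescribed selection. Hence \cite[Corollary~28.2.1 and Theorem~28.3]{Rockafellar1970} yields $\lambda_k\in\R_+^m$ such that $\hat x^{k+1}$ minimizes the Lagrangian $\cL_k(z):=\Phi_k(z)+\sum_i\lambda_{k,i}\Psi_{i,k}(z)$ and complementary slackness holds. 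The sum rule for finite convex functions then gives $0\in\partial\Phi_k(\hat x^{k+1})+\sum_i\lambda_{k,i}\partial\Psi_{i,k}(\hat x^{k+1})$. Since $\Phi_k-P_1$ and $\Psi_{i,k}-Q_{i1}$ are smooth, this produces $p_1^k\in\partial P_1(\hat x^{k+1})$ and $q_{i1}^k\in\partial Q_{i1}(\hat x^{k+1})$ that realize stationarity, feasibility and complementarity. Conversely, every KKT multiplier admits such a pairing, by the same sum rule applied in the definition of a KKT pair.

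For the uniform bound, I will use the standard Slater argument on the Lagrangian. Let $\lambda_k$ be any KKT multiplier. Then $\hat x^{k+1}$ minimizes the convex function $\cL_k$, because the stationarity inclusion gives $0\in\partial\cL_k(\hat x^{k+1})$. Complementarity gives $\cL_k(\hat x^{k+1})=\Phi_k(\hat x^{k+1})$. Evaluating at the Slater point $s:=s_{x^k}$ gives
\[
\Phi_k(\hat x^{k+1})\le\Phi_k(s)+\sum_i\lambda_{k,i}\Psi_{i,k}(s)\le\Phi_k(s)-\sigma_S\|\lambda_k\|_1 ,
\]
so $\|\lambda_k\|_1\le[\Phi_k(s)-\Phi_k(\hat x^{k+1})]/\sigma_S$. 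It remains to bound the numerator uniformly in $k$.

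The main obstacle is this uniform bound. The difficulty is that Assumption~\ref{ass:slater} says nothing about where $s_{x^k}$ lies, and $\Phi_k$ is only controlled near $\Omega_0$. The key observation is that $\Psi_{i,k}(s)\le-\sigma_S<0$ places $s\in\cS_k$. Lemma~\ref{lem:surrogate-localization} then gives $\norm{s-x^k}\le R_S$, and the same lemma gives $\norm{\hat x^{k+1}-x^k}\le R_S$. For the upper bound on $\Phi_k(s)$, I will use convexity of $P_1$ with a subgradient at $s$. It is cleaner, however, to use $P_1(s)\le P_1(x^k)+\ip{\tilde p}{s-x^k}$ with $\tilde p\in\partial P_1(s)$. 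Because $s$ need not lie in $\Omega_0$, I instead bound $P_1$ on the compact set $\Omega_0+\B(0,R_S)$ by continuity. That set contains $s$, and the bound is independent of $k$. Combined with $\norm{\nabla f(x^k)-\xi^k}\le M_{\rm lev}$ and $|F(x^k)|$, $|f(x^k)|$, $|P_2(x^k)|$ bounded on the compact $\Omega_0$, this gives $\Phi_k(s)\le C_1$. For the lower bound I will use $\Phi_k(\hat x^{k+1})\ge F(\hat x^{k+1})\ge F_{\inf}$, which follows from majorization and $\hat x^{k+1}\in\Omega_0$. One could alternatively use the simpler bound $\Phi_k(s)\ge\Phi_k(\hat x^{k+1})$. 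Setting $\Lambda:=[C_1-F_{\inf}]_+/\sigma_S$ completes the proof.
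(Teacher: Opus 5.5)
Your proof is correct. The existence part matches the paper, but you bound the multipliers by a different mechanism.

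You use the classical zeroth-order Slater estimate. Lagrangian minimality of $\hat x^{k+1}$ and complementarity give $\sigma_S\norm{\lambda_k}_1\le\Phi_k(s)-\Phi_k(\hat x^{k+1})$. You then bound $\Phi_k(s)$ from above and $\Phi_k(\hat x^{k+1})$ from below by $F_{\inf}$.

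The paper works at first order. It writes stationarity as $u_k+\sum_i\lambda_{i,k}w_{i,k}=0$, with $u_k\in\partial\Phi_k(\hat x^{k+1})$ and $w_{i,k}\in\partial\Psi_{i,k}(\hat x^{k+1})$. It applies the subgradient inequality for each $\Psi_{i,k}$ between $\hat x^{k+1}$ and the Slater point, which gives $\sigma_S\norm{\lambda_k}_1\le\ip{u_k}{s^k-\hat x^{k+1}}\le 2R_S\norm{u_k}$. Since $\ip{u_k}{s^k-\hat x^{k+1}}\le\Phi_k(s^k)-\Phi_k(\hat x^{k+1})$, the paper's intermediate bound is the sharper of the two. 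More importantly, it involves only subgradients at $x^k$ and $\hat x^{k+1}$, both of which lie in $\Omega_0$. The existing constants $M_{\rm lev}$ and $R_S$ therefore give the explicit $\Lambda=2R_S(M_{\rm lev}+\mu_fR_S)/\sigma_S$.

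Your route never handles constraint subgradients, which is its simplicity. The cost is that the Slater point generally lies outside $\Omega_0$. You therefore have to control $P_1(s)$ by continuity on the compact set $\Omega_0+\B(0,R_S)$, and you also need bounds on $f$ and $P_2$ over $\Omega_0$ and on $F_{\inf}$. The resulting $\Lambda$ is valid but less explicit.

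Two small points on the write-up:
\begin{enumerate}
\item The detour through $\tilde p\in\partial P_1(s)$ runs into the same off-$\Omega_0$ issue: $M_{\rm lev}$ does not bound $\tilde p$. The continuity argument you settle on is the right choice.
\item The aside $\Phi_k(s)\ge\Phi_k(\hat x^{k+1})$ only shows that the numerator is nonnegative. It cannot replace the lower bound $\Phi_k(\hat x^{k+1})\ge F_{\inf}$.
\end{enumerate}
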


\begin{proof}
The uniform surrogate Slater condition and the convex KKT theorem
\cite[Corollary~28.2.1 and Theorem~28.3]{Rockafellar1970}
ensure that the multiplier set is nonempty. Fix any KKT
multiplier $\lambda_k\in\R_+^m$. By the exact subdifferential
sum rule \cite[Theorem~23.8]{Rockafellar1970}, we can choose
\[
p_1^k\in\partial P_1(\hat{x}^{k+1}),
\qquad
q_{i1}^k\in\partial Q_{i1}(\hat{x}^{k+1}),\quad i\in[m],
\]
such that
\begin{equation}\label{eq:surrogate-KKT-expanded}
\begin{aligned}
0={}&\nabla f(x^k)+\mu_f(\hat{x}^{k+1}-x^k)+p_1^k-\xi^k\\
&+\sum_{i=1}^m\lambda_{i,k}
\bigl(\nabla g_i(x^k)+\mu_{g_i}(\hat{x}^{k+1}-x^k)
+q_{i1}^k-\zeta_i^k\bigr),\\
\Psi_{i,k}(\hat{x}^{k+1})&\le0,\qquad
\lambda_{i,k}\Psi_{i,k}(\hat{x}^{k+1})=0,
\quad i\in[m].
\end{aligned}
\end{equation}
For brevity, write
\[
\begin{aligned}
u_k&:=\nabla f(x^k)+\mu_f(\hat{x}^{k+1}-x^k)+p_1^k-\xi^k,\\
w_{i,k}&:=\nabla g_i(x^k)+\mu_{g_i}(\hat{x}^{k+1}-x^k)
+q_{i1}^k-\zeta_i^k.
\end{aligned}
\]
Then $u_k\in\partial\Phi_k(\hat{x}^{k+1})$,
$w_{i,k}\in\partial\Psi_{i,k}(\hat{x}^{k+1})$, and
$u_k+\sum_i\lambda_{i,k}w_{i,k}=0$.
Since $x^k,\hat{x}^{k+1}\in\Omega_0$ and
$\norm{\hat{x}^{k+1}-x^k}\le R_S$, the definition of
$M_{\rm lev}$ gives
\[
\norm{u_k}\le M_{\rm lev}+\mu_fR_S.
\]

Let $s^k:=s_{x^k}$ be the Slater point from
Assumption~\ref{ass:slater}. The subgradient inequality,
the bound $\Psi_{i,k}(s^k)\le-\sigma_S$, and complementarity
yield, for every $i\in[m]$,
\[
\begin{aligned}
\lambda_{i,k}\ip{w_{i,k}}{\hat{x}^{k+1}-s^k}
&\ge\lambda_{i,k}
\bigl(\Psi_{i,k}(\hat{x}^{k+1})-\Psi_{i,k}(s^k)\bigr)\\
&\ge\sigma_S\lambda_{i,k}.
\end{aligned}
\]
Summing these inequalities and using stationarity, we obtain
\[
\begin{aligned}
\sigma_S\|\lambda_k\|_1
&\le\sum_{i=1}^m\lambda_{i,k}
\ip{w_{i,k}}{\hat{x}^{k+1}-s^k}\\
&=-\ip{u_k}{\hat{x}^{k+1}-s^k}
\le\norm{u_k}\,\norm{\hat{x}^{k+1}-s^k}.
\end{aligned}
\]
Both $s^k$ and $\hat{x}^{k+1}$ belong to $\cS_k$.
Hence \eqref{eq:S-localization} gives
$\norm{\hat{x}^{k+1}-s^k}\le2R_S$, and therefore
\[
\|\lambda_k\|_1
\le\frac{2R_S(M_{\rm lev}+\mu_fR_S)}{\sigma_S}
=:\Lambda.
\]
This bound is independent of $k$ and of the chosen KKT
multiplier, proving \eqref{eq:Mlambda}.
\end{proof}

We now turn from the exact surrogate solution to its SPG
approximation. To apply Theorem~\ref{thm:inner} with a common
budget, it remains to bound the inner constants $A_0$ and
$M_R$ uniformly and to choose a single admissible shift $t_0$.
The preceding localization provides the required starting point.

\begin{proposition}[Uniform SPG oracle]
\label{prop:uniform-spg}
Suppose that Assumption~\ref{ass:data} holds. There exist
$C_{\rm spg}>0$ and an integer $t_0\ge0$, independent of $k$
and of the admissible choice of $\xi^k\in\partial P_2(x^k)$,
with the following property. For every $x^k\in\Omega_0$ and
$0<\tau\le1$, set
\begin{equation}\label{eq:Ttau}
T_\tau:=\left\lceil\frac{C_{\rm spg}}\tau\right\rceil.
\end{equation}
Then Algorithm~\ref{alg:spg}, started from $x^k$ with this
$t_0$ and run for $T_\tau$ iterations, returns a point
$x^{k+1}:=\bar z_{T_\tau}^k$ satisfying
\begin{equation}\label{eq:uniform-oracle}
\max_{i\in[m]}\Psi_{i,k}(x^{k+1})\le\tau,
\qquad
\Phi_k(x^{k+1})-\Phi_k(\hat{x}^{k+1})\le\tau.
\end{equation}
\end{proposition}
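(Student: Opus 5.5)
The plan is to apply Theorem~\ref{thm:inner} to the surrogate \eqref{eq:surrogate} with $z^0=x^k$. It then suffices to bound, uniformly over $x^k\in\Omega_0$ and over the admissible $\xi^k$, three things: the quantity $A_0$, the radius $R$, and the local bound $M_R$. Once these are uniform, the budget \eqref{eq:inner-budget} is dominated by a single $C_{\rm spg}/\tau$ for all $\tau\le1$.

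First fix $t_0$. Since the inner constants are $\mu_{\min}=\hat\mu_{\min}$ and $\gamma_{\min}=1/\hat\mu_{\max}$, take
\[
t_0:=\max\{0,\lceil 2\hat\mu_{\max}/\hat\mu_{\min}\rceil-2\}.
\]
This guarantees $\gamma_0\le\gamma_{\min}$, and $t_0$ depends only on the data.

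Next bound $B_0$. At $z^0=x^k$, choose $r_0^0\in\partial P_1(x^k)$ and $r_i^0\in\partial Q_{i1}(x^k)$. Since the quadratic terms vanish at the center, we have $\nabla h_0^k(x^k)=\nabla f(x^k)-\xi^k$ and $\nabla h_i^k(x^k)=\nabla g_i(x^k)-\zeta_i^k$. The definition of $M_{\rm lev}$ then gives $B_0\le M_{\rm lev}$, uniformly in $k$ and in the choice of $\xi^k$. Hence the fixed choice
\[
R:=2M_{\rm lev}/\hat\mu_{\min}
\]
satisfies \eqref{eq:invariant-radius} for every center.

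For $A_0$, recall that $z^*=\hat x^{k+1}\in\cS_k$. Lemma~\ref{lem:surrogate-localization} then gives $\norm{x^k-\hat x^{k+1}}\le R_S$, so
\[
A_0\le \tfrac{\hat\mu_{\min}}4 t_0(t_0+1)R_S^2=:\bar A.
\]

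The main obstacle is a uniform bound on $M_R$. The difficulty is that $\B(x^k,R)$ need not lie in $\Omega_0$, so $M_{\rm lev}$ does not apply directly on it. Instead I would use the compact enlarged set
\[
K:=\Omega_0+\B(0,R).
\]
For $z\in\B(x^k,R)$ the gradient of each smooth part splits as
\[
\nabla h_0^k(z)=\nabla f(x^k)-\xi^k+\mu_f(z-x^k),
\]
and similarly for $h_i^k$. The terms $\nabla f(x^k)$, $\xi^k$, $\nabla g_i(x^k)$ and $\zeta_i^k$ are bounded by $M_{\rm lev}$, and the linear terms are bounded by $\hat\mu_{\max}R$. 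The remaining subgradients $r\in\partial P_1(z)$ and $r\in\partial Q_{i1}(z)$ with $z\in K$ are bounded by a constant $M_K$, since subdifferentials of finite convex functions are bounded on compact sets. Therefore
\[
M_R\le \bar M:=M_{\rm lev}+\hat\mu_{\max}R+M_K,
\]
which is independent of $k$ and of $\xi^k$.

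Finally, for $0<\tau\le1$ we have $1\le1/\tau$ and $2\sqrt{\bar A/\tau}\le 2\sqrt{\bar A}/\tau$. So setting
\[
C_{\rm spg}:=\max\{1,\,2\sqrt{\bar A},\,8\bar M^2/\hat\mu_{\min}\}
\]
makes $T_\tau=\lceil C_{\rm spg}/\tau\rceil$ satisfy \eqref{eq:inner-budget}. Theorem~\ref{thm:inner}, applied with $H_0=\Phi_k$, $H_i=\Psi_{i,k}$ and $z^*=\hat x^{k+1}$, then yields $I\neq\emptyset$ and \eqref{eq:uniform-oracle}.
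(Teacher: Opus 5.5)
Your proposal is correct and follows essentially the same route as the paper: the same shift $t_0$, the bound $B_0\le M_{\rm lev}$ at the center, the localization bound $\norm{x^k-\hat x^{k+1}}\le R_S$ for $A_0$, and a uniform bound on $M_R$ obtained from the affine gradient formulas together with subgradient bounds on a compact enlargement of $\Omega_0$. The only difference is that you take the radius $R=2M_{\rm lev}/\hat\mu_{\min}$, whereas the paper takes $R=R_S\ge 2M_{\rm lev}/\hat\mu_{\min}$; this is immaterial, and your choice gives a marginally smaller constant $\bar M$.
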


\begin{proof}
We first choose a shift valid for every surrogate. The smooth
components introduced above have gradients
\[
\begin{aligned}
\nabla h_0^k(v)
&=\nabla f(x^k)-\xi^k+\mu_f(v-x^k),\\
\nabla h_i^k(v)
&=\nabla g_i(x^k)-\zeta_i^k+\mu_{g_i}(v-x^k),
\qquad i\in[m].
\end{aligned}
\]
Thus $\mu_{\min}=\hat\mu_{\min}$ and
$\gamma_{\min}=1/\hat\mu_{\max}$, independently of the center
and the linearization subgradients. Choose
\begin{equation*}\label{eq:spge1}
\begin{aligned}
t_0
&:=\max\left\{
0,\,
\left\lceil\frac{2\hat\mu_{\max}}{\hat\mu_{\min}}-2
\right\rceil
\right\},\\
t_0+2
&\ge\frac{2\hat\mu_{\max}}{\hat\mu_{\min}},
\qquad
\frac{2}{\hat\mu_{\min}(t_0+2)}
\le\frac1{\hat\mu_{\max}}.
\end{aligned}
\end{equation*}
The stepsize condition in \eqref{eq:decreasing-step} therefore
holds simultaneously for all surrogate components and all
outer iterations.

We next construct a common region for the inner iterates.
At the starting point $x^k$, the composite subgradients of
all surrogate components are bounded by $M_{\rm lev}$,
so $B_0\le M_{\rm lev}$. Since
$R_S\ge2M_{\rm lev}/\hat\mu_{\min}$,
Lemma~\ref{lem:common-invariance} gives
$z^t\in\B(x^k,R_S)$ for every inner iterate. Consequently,
all inner trajectories lie in the compact set
\[
\widehat\Omega:=
\left\{v\in\R^n:
\operatorname{dist}(v,\Omega_0)\le R_S\right\}.
\]
Here compactness follows from compactness of $\Omega_0$.
Writing $R_0=P_1$ and $R_i=Q_{i1}$ for $i\in[m]$, define
\[
\hat M:=M_{\rm lev}+\hat\mu_{\max}R_S
+\max_{i\in[m]_0}
\sup_{\substack{v\in\widehat\Omega\\r\in\partial R_i(v)}}
\norm{r}.
\]
This constant is finite by local boundedness of the
subdifferentials of the finite convex functions $R_i$.
Moreover, the affine gradient formulas give, for
$v\in\B(x^k,R_S)$ and $r\in\partial R_i(v)$,
\[
\norm{\nabla h_i^k(v)+r}
\le M_{\rm lev}+\hat\mu_{\max}R_S+\norm{r}
\le\hat M,
\qquad i\in[m]_0.
\]
Thus the inner bound with radius $R=R_S$ satisfies
$M_R\le\hat M$ uniformly over all admissible centers and
linearizations.

Finally, we choose a common iteration budget. Since
$z^0=x^k$, $z^*=\hat{x}^{k+1}$, and
$\norm{\hat{x}^{k+1}-x^k}\le R_S$,
\[
A_0\le\frac{\hat\mu_{\min}}4t_0(t_0+1)R_S^2.
\]
Set
\[
C_{\rm spg}:=
\max\left\{
1,\,
R_S\sqrt{\hat\mu_{\min}t_0(t_0+1)},\,
\frac{8\hat M^2}{\hat\mu_{\min}}
\right\},
\]
and let $T_\tau$ be given by \eqref{eq:Ttau}.
For $0<\tau\le1$, we have
$1\le\tau^{-1}$ and $\tau^{-1/2}\le\tau^{-1}$, so
\begin{equation*}\label{eq:spge2}
\max\left\{
1,\,
R_S\sqrt{\frac{\hat\mu_{\min}t_0(t_0+1)}\tau},\,
\frac{8\hat M^2}{\hat\mu_{\min}\tau}
\right\}
\le\frac{C_{\rm spg}}\tau
\le T_\tau.
\end{equation*}
Together with the bounds on $A_0$ and $M_R$, and the fact
that $T_\tau$ is an integer, this verifies
\eqref{eq:inner-budget}. Theorem~\ref{thm:inner} therefore
gives \eqref{eq:uniform-oracle}.
\end{proof}

Proposition~\ref{prop:uniform-spg} provides an SPG budget of
$O(\tau^{-1})$ that controls surrogate feasibility and the
one-sided objective residual uniformly over the outer iterations
and admissible linearizations. Implementing this sufficient
budget requires available upper bounds on its constants.
We now use the uniform oracle and multiplier bound to calibrate
the outer method and show that each iteration either returns
an approximate KKT point or produces a feasible point with
sufficient descent.

% Convergence analysis: clean replacement section for the original manuscript.
\section{Convergence analysis}
\label{sec:outer}

We now use the uniform SPG oracle to analyze the outer method.
The argument proceeds from the inner error bounds to the stopping
test: we first estimate the distance to the exact surrogate solution,
then transfer its KKT system to the original problem, and finally
calibrate the computed displacement. This calibration yields an
approximate KKT guarantee when the method stops and sufficient
descent whenever it continues.

\begin{proposition}[Distance to the exact surrogate solution]
%\label{prop:inexact-distance}
Suppose that Assumptions~\ref{ass:data} and \ref{ass:slater} hold.
Let $x^k\in\Omega_0$, let $0<\tau\le1$, and let
$x^{k+1}$ be the SPG output in Proposition~\ref{prop:uniform-spg}.
Then
\begin{equation}\label{eq:oracle-distance}
\norm{x^{k+1}-\hat{x}^{k+1}}
\le\sqrt{\frac{2(1+\Lambda)\tau}{\mu_f}}.
\end{equation}
\end{proposition}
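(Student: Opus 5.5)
We bound the distance through a Lagrangian argument, since $x^{k+1}$ need not be feasible for \eqref{eq:surrogate}; plain strong convexity of $\Phi_k$ on $\cS_k$ is therefore not enough. Fix a KKT multiplier $\lambda_k$ from Proposition~\ref{prop:multiplier-bound}, with the associated subgradients $p_1^k,q_{i1}^k$. Consider the Lagrangian
\[
L_k(z):=\Phi_k(z)+\sum_{i=1}^m\lambda_{i,k}\Psi_{i,k}(z).
\]
It is a sum of convex functions, and it is $\mu_f$-strongly convex because $\Phi_k$ is and $\lambda_{i,k}\Psi_{i,k}$ is convex. The expanded stationarity condition \eqref{eq:surrogate-KKT-expanded} says exactly that $u_k+\sum_i\lambda_{i,k}w_{i,k}=0$ with $u_k\in\partial\Phi_k(\hat x^{k+1})$ and $w_{i,k}\in\partial\Psi_{i,k}(\hat x^{k+1})$. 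Hence $0\in\partial L_k(\hat x^{k+1})$, and $\hat x^{k+1}$ minimizes $L_k$ over all of $\R^n$.

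The strong-convexity growth inequality at this minimizer then gives
\[
L_k(x^{k+1})-L_k(\hat x^{k+1})\ge\frac{\mu_f}{2}\norm{x^{k+1}-\hat x^{k+1}}^2.
\]
To make this rigorous, we use the subgradient inequality for $\Phi_k$ with its $\mu_f$ quadratic term and the plain subgradient inequalities for each $\Psi_{i,k}$, weighted by $\lambda_{i,k}\ge0$. The linear terms cancel by stationarity.

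Next we bound the left-hand side from above. Complementarity gives $\lambda_{i,k}\Psi_{i,k}(\hat x^{k+1})=0$, so $L_k(\hat x^{k+1})=\Phi_k(\hat x^{k+1})$. The oracle bounds \eqref{eq:uniform-oracle} give $\Phi_k(x^{k+1})-\Phi_k(\hat x^{k+1})\le\tau$ and $\Psi_{i,k}(x^{k+1})\le\tau$ for every $i$. Therefore
\[
L_k(x^{k+1})-L_k(\hat x^{k+1})\le\tau+\tau\|\lambda_k\|_1\le(1+\Lambda)\tau,
\]
where the last step uses \eqref{eq:Mlambda}. Combining this with the lower bound and taking square roots yields \eqref{eq:oracle-distance}.

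The only delicate step is justifying that $\hat x^{k+1}$ is an unconstrained minimizer of $L_k$ with the quadratic growth. This rests on the exact sum rule already invoked in Proposition~\ref{prop:multiplier-bound}, so the expanded KKT form supplies the needed subgradients directly. The nonnegativity of $\lambda_k$ is essential when bounding $\sum_i\lambda_{i,k}\Psi_{i,k}(x^{k+1})$ by $\tau\|\lambda_k\|_1$.
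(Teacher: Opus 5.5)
Your proposal is correct and follows the paper's proof essentially step for step. Both fix a KKT multiplier with $\norm{\lambda_k}_1\le\Lambda$, use stationarity to see that $\hat{x}^{k+1}$ minimizes the $\mu_f$-strongly convex Lagrangian $\cL_k$, and bound $\cL_k(x^{k+1})-\cL_k(\hat{x}^{k+1})$ by $(1+\Lambda)\tau$ via complementarity, the one-sided oracle bounds \eqref{eq:uniform-oracle}, and $\lambda_k\ge0$; your explicit derivation of the growth inequality from the strong subgradient inequality for $\Phi_k$ plus weighted subgradient inequalities for the $\Psi_{i,k}$ is just a more hands-on version of the paper's appeal to strong convexity of $\cL_k$.
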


\begin{proof}
Fix a surrogate KKT multiplier $\lambda_k$ from
Proposition~\ref{prop:multiplier-bound} and define
\[
\cL_k(v):=\Phi_k(v)
+\sum_{i=1}^m\lambda_{i,k}\Psi_{i,k}(v).
\]
Since the surrogate functions are finite convex, the exact
subdifferential sum rule and the KKT system give
$0\in\partial\cL_k(\hat{x}^{k+1})$. Moreover, $\cL_k$ is
$\mu_f+\sum_i\lambda_{i,k}\mu_{g_i}$-strongly convex.
Thus $\hat{x}^{k+1}$ is its unique minimizer. The resulting sharper
estimate is
\[
\norm{x^{k+1}-\hat{x}^{k+1}}^2
\le\frac{2(1+\norm{\lambda_k}_1)\tau}
{\mu_f+\sum_i\lambda_{i,k}\mu_{g_i}}.
\]
For the uniform bound below, it suffices to use the modulus $\mu_f$. Applying the strong-convexity inequality at
$x^{k+1}$ and using complementarity, we obtain
\[
\begin{aligned}
\frac{\mu_f}{2}\norm{x^{k+1}-\hat{x}^{k+1}}^2
&\le\cL_k(x^{k+1})-\cL_k(\hat{x}^{k+1})\\
&=\Phi_k(x^{k+1})-\Phi_k(\hat{x}^{k+1})
+\sum_{i=1}^m\lambda_{i,k}\Psi_{i,k}(x^{k+1})\\
&\le\bigl(1+\norm{\lambda_k}_1\bigr)\tau
\le(1+\Lambda)\tau.
\end{aligned}
\]
The last line uses \eqref{eq:uniform-oracle}, nonnegativity
of $\lambda_k$, and \eqref{eq:Mlambda}. In particular,
the argument requires only the one-sided objective bound.
Taking square roots proves \eqref{eq:oracle-distance}.
\end{proof}

The preceding estimate links the computed displacement to the
exact surrogate displacement. We next determine how small the
latter must be to certify the current center. The exact solution
and its multiplier will serve as an analytical witness for the
split-point condition in Definition~\ref{def:near-kkt}.

To obtain a uniform transfer bound, define
\[
\begin{gathered}
M_{Q_2}:=
\max_{i\in[m]}
\sup_{\substack{x\in\Omega_0\\q\in\partial Q_{i2}(x)}}
\norm{q}<+\infty,\\
\hat{\rho}_{\rm dc}:=
\max\left\{
1,\,
L_f+\mu_f,\,
\max_{i\in[m]}(L_{g_i}+\mu_{g_i}),\,
\max_{i\in[m]}
\left(\frac{L_{g_i}+\mu_{g_i}}2+2M_{Q_2}\right)
\right\}.
\end{gathered}
\]
Here $M_{Q_2}$ is finite by compactness of $\Omega_0$ and
local boundedness of the subdifferentials, while
$L_f+\mu_f=2L_f+\rho_f$ and
$L_{g_i}+\mu_{g_i}=2L_{g_i}+\rho_{g_i}$.
The constant $\hat{\rho}_{\rm dc}$ will control both the
stationarity residual and the constraint model gaps.
Let $\rho_{\min}:=\min_{i\in[m]}\rho_{g_i}>0$ and fix
\begin{equation}\label{eq:fixed-calibration}
\begin{aligned}
\kappa_{\rm tr}
&\ge\hat{\rho}_{\rm dc}(1+\Lambda),\\
0<c_{\rm tr}
&\le\min\left\{
1,\,
\frac{\mu_f}{32(1+\Lambda)\kappa_{\rm tr}^2},\,
\frac{\rho_{\min}}{4\kappa_{\rm tr}^2}
\right\}.
\end{aligned}
\end{equation}
The first constant gives a uniform displacement threshold
for KKT transfer; the second will set the inner accuracy
needed for the stopping and acceptance guarantees.
Both are independent of the outer index and the target accuracy.

\begin{lemma}[KKT transfer from the exact surrogate]
\label{lem:kkt-transfer}
Suppose that Assumptions~\ref{ass:data} and \ref{ass:slater} hold.
Let $x^k\in\Omega_0$, let $\hat{x}^{k+1}$ solve
\eqref{eq:surrogate}, and let
$(\lambda_k,p_1^k,(q_{i1}^k)_{i=1}^m)$ satisfy
\eqref{eq:surrogate-KKT-expanded} as supplied by
Proposition~\ref{prop:multiplier-bound}; in particular,
$\norm{\lambda_k}_1\le\Lambda$. For any $\eps\in(0,1]$, if
\begin{equation}\label{eq:kkt-transfer-condition}
\norm{\hat{x}^{k+1}-x^k}
\le\frac{\eps}
{\hat{\rho}_{\rm dc}(1+\norm{\lambda_k}_1)},
\end{equation}
then $x^k$ is an $\eps$-KKT point of \eqref{eq:problem} in
the sense of Definition~\ref{def:near-kkt}. In particular,
since $\norm{\lambda_k}_1\le\Lambda$, the uniform condition
$\norm{\hat{x}^{k+1}-x^k}\le\eps/\kappa_{\rm tr}$ is sufficient.
Consequently, if $x^k$ is not an $\eps$-KKT point, then
$\norm{\hat{x}^{k+1}-x^k}>\eps/\kappa_{\rm tr}$.
\end{lemma}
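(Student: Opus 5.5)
Take the witness point $z:=\hat x^{k+1}$ and the surrogate multiplier $\lambda:=\lambda_k$. For the subgradients, use $p_1:=p_1^k\in\partial P_1(z)$ and $q_{i1}:=q_{i1}^k\in\partial Q_{i1}(z)$, evaluated at $z$. Use $p_2:=\xi^k\in\partial P_2(x^k)$ and $q_{i2}:=\zeta_i^k\in\partial Q_{i2}(x^k)$, evaluated at the center $x=x^k$. This matches the two-point structure of Definition~\ref{def:near-kkt}. Write $d:=\norm{\hat x^{k+1}-x^k}$. By Lemma~\ref{lem:surrogate-localization}, $z\in\Omega_0$ and $G_i(z)\le0$. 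The distance requirement follows from $d\le\eps/\hat\rho_{\rm dc}\le\eps$, since $\hat\rho_{\rm dc}\ge1$. It remains to bound the stationarity residual and the complementarity sum by $\eps$.

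\emph{Stationarity.} Subtract the zero expression in \eqref{eq:surrogate-KKT-expanded} from the residual evaluated at $z$. What remains is
\[
\nabla f(z)-\nabla f(x^k)-\mu_f(z-x^k)
+\sum_i\lambda_{i,k}\bigl(\nabla g_i(z)-\nabla g_i(x^k)-\mu_{g_i}(z-x^k)\bigr).
\]
By Lipschitz continuity, its norm is at most $(L_f+\mu_f)d+\sum_i\lambda_{i,k}(L_{g_i}+\mu_{g_i})d\le\hat\rho_{\rm dc}(1+\norm{\lambda_k}_1)d\le\eps$.

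\emph{Complementarity.} This is the step needing care. Since $G_i(z)\le0$, each term is $|\lambda_{i,k}G_i(z)|=-\lambda_{i,k}G_i(z)$. Only indices with $\lambda_{i,k}>0$ matter, and for those complementarity gives $\Psi_{i,k}(z)=0$. So it suffices to bound the model gap $\Psi_{i,k}(z)-G_i(z)$ from above by $\hat\rho_{\rm dc}d$. Expanding gives
\[
\Psi_{i,k}(z)-G_i(z)=\bigl[g_i(x^k)+\ip{\nabla g_i(x^k)}{z-x^k}-g_i(z)\bigr]+\tfrac{\mu_{g_i}}2d^2+\bigl[Q_{i2}(z)-Q_{i2}(x^k)-\ip{\zeta_i^k}{z-x^k}\bigr].
\]
The first bracket is at most $\tfrac{L_{g_i}}2d^2$ by \eqref{eq:two-sided-smoothness}. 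For the last bracket, take $q\in\partial Q_{i2}(z)$; since $z\in\Omega_0$, $\norm{q}\le M_{Q_2}$. Convexity then bounds the bracket by $\ip{q-\zeta_i^k}{z-x^k}\le2M_{Q_2}d$. Because $d\le\eps\le1$, we have $d^2\le d$, so the gap is at most $\bigl(\tfrac{L_{g_i}+\mu_{g_i}}2+2M_{Q_2}\bigr)d\le\hat\rho_{\rm dc}d$. Summing over $i$ with the weights $\lambda_{i,k}$ gives $\sum_i|\lambda_{i,k}G_i(z)|\le\norm{\lambda_k}_1\hat\rho_{\rm dc}d\le\eps$. This is where the $M_{Q_2}$ term and the quadratic-to-linear reduction $d^2\le d$ are used.

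\emph{Uniform condition.} If $d\le\eps/\kappa_{\rm tr}$, then \eqref{eq:kkt-transfer-condition} holds, because $\kappa_{\rm tr}\ge\hat\rho_{\rm dc}(1+\Lambda)\ge\hat\rho_{\rm dc}(1+\norm{\lambda_k}_1)$ by \eqref{eq:Mlambda}. The final assertion of the lemma is the contrapositive of this implication.
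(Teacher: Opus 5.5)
Your proposal is correct and follows the paper's proof essentially step for step. You use the same witness $(x^k,\hat x^{k+1},\lambda_k)$ with split subgradients, the same Lipschitz bound on the residual after subtracting the surrogate KKT identity, and the same model-gap bound $\Psi_{i,k}(\hat x^{k+1})-G_i(\hat x^{k+1})\le\hat\rho_{\rm dc}\norm{\hat x^{k+1}-x^k}$, obtained from the lower smoothness estimate, a subgradient of $Q_{i2}$ at $\hat x^{k+1}$, the constant $M_{Q_2}$ (using $\norm{\zeta_i^k}\le M_{Q_2}$ since $x^k\in\Omega_0$), and $d^2\le d$.
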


\begin{proof}
Set $d^k:=\hat{x}^{k+1}-x^k$. By
Lemma~\ref{lem:surrogate-localization}, both $x^k$ and
$\hat{x}^{k+1}$ belong to $\Omega_0$. We use
$(x,z,\lambda)=(x^k,\hat{x}^{k+1},\lambda_k)$ as the witness
in Definition~\ref{def:near-kkt} and verify proximity,
stationarity, and complementarity in turn.

First, \eqref{eq:kkt-transfer-condition} and
$\hat{\rho}_{\rm dc}\ge1$ imply
$\norm{d^k}\le\eps\le1$. For stationarity, define
\[
\begin{aligned}
r^k:={}&\nabla f(\hat{x}^{k+1})+p_1^k-\xi^k\\
&+\sum_{i=1}^m\lambda_{i,k}
\bigl(\nabla g_i(\hat{x}^{k+1})+q_{i1}^k-\zeta_i^k\bigr).
\end{aligned}
\]
Subtracting the stationarity identity in
\eqref{eq:surrogate-KKT-expanded} gives
\[
\begin{aligned}
r^k={}&\nabla f(\hat{x}^{k+1})-\nabla f(x^k)-\mu_fd^k\\
&+\sum_{i=1}^m\lambda_{i,k}
\bigl(\nabla g_i(\hat{x}^{k+1})-\nabla g_i(x^k)
-\mu_{g_i}d^k\bigr).
\end{aligned}
\]
Lipschitz continuity of the gradients therefore yields
\[
\begin{aligned}
\norm{r^k}
&\le\left[L_f+\mu_f+
\sum_{i=1}^m\lambda_{i,k}(L_{g_i}+\mu_{g_i})\right]
\norm{d^k}\\
&\le\hat{\rho}_{\rm dc}(1+\norm{\lambda_k}_1)
\norm{d^k}\le\eps.
\end{aligned}
\]
Here $p_1^k$ and $q_{i1}^k$ are subgradients at
$\hat{x}^{k+1}$, whereas $\xi^k$ and $\zeta_i^k$ are
subgradients at $x^k$. Thus $r^k$ is the split-point
stationarity residual required by Definition~\ref{def:near-kkt}.

To transfer complementarity, we first bound the constraint
model gaps. For each $i\in[m]$, choose
$\bar\zeta_i^k\in\partial Q_{i2}(\hat{x}^{k+1})$.
Since both endpoints belong to $\Omega_0$,
$\norm{\zeta_i^k},\norm{\bar\zeta_i^k}\le M_{Q_2}$.
Expanding the gap and using the lower smoothness estimate for
$g_i$ and the subgradient inequality for $Q_{i2}$ at
$\hat{x}^{k+1}$, we obtain
\[
\begin{aligned}
0
&\le\Psi_{i,k}(\hat{x}^{k+1})-G_i(\hat{x}^{k+1})\\
&=g_i(x^k)+\ip{\nabla g_i(x^k)}{d^k}
-g_i(\hat{x}^{k+1})
+\frac{\mu_{g_i}}2\norm{d^k}^2\\
&\quad+Q_{i2}(\hat{x}^{k+1})-Q_{i2}(x^k)
-\ip{\zeta_i^k}{d^k}\\
&\le\frac{L_{g_i}+\mu_{g_i}}2\norm{d^k}^2
+\ip{\bar\zeta_i^k-\zeta_i^k}{d^k}\\
&\le\frac{L_{g_i}+\mu_{g_i}}2\norm{d^k}^2
+2M_{Q_2}\norm{d^k}\\
&\le\hat{\rho}_{\rm dc}\norm{d^k},
\qquad i\in[m].
\end{aligned}
\]
The first inequality follows from constraint majorization,
and the last uses $\norm{d^k}\le1$ and the definition of
$\hat{\rho}_{\rm dc}$. This estimate requires only bounded
subgradients, not continuity of the selections $\zeta_i(\cdot)$.

Finally, surrogate complementarity and feasibility of
$\hat{x}^{k+1}$ for the original problem give
\[
\begin{aligned}
\sum_{i=1}^m\left|\lambda_{i,k}G_i(\hat{x}^{k+1})\right|
&=\sum_{i=1}^m\lambda_{i,k}
\left[\Psi_{i,k}(\hat{x}^{k+1})-G_i(\hat{x}^{k+1})\right]\\
&\le\hat{\rho}_{\rm dc}\norm{\lambda_k}_1\norm{d^k}\\
&\le\hat{\rho}_{\rm dc}(1+\norm{\lambda_k}_1)
\norm{d^k}\le\eps.
\end{aligned}
\]
Together with feasibility, proximity, and stationarity, this
verifies Definition~\ref{def:near-kkt}. Moreover,
$\kappa_{\rm tr}\ge\hat{\rho}_{\rm dc}(1+\Lambda)
\ge\hat{\rho}_{\rm dc}(1+\norm{\lambda_k}_1)$, so
$\norm{d^k}\le\eps/\kappa_{\rm tr}$ implies
\eqref{eq:kkt-transfer-condition}. The last assertion follows
by contraposition.
\end{proof}

We now calibrate the computed stopping test. To apply
Lemma~\ref{lem:kkt-transfer}, we reserve one quarter of its
uniform displacement threshold for the inner error and use
the remaining three quarters for the computed displacement.
For a target accuracy $\eps\in(0,1]$, set
\begin{equation}\label{eq:mgspg-calibration}
\eta_\eps:=\frac{3\eps}{4\kappa_{\rm tr}},
\qquad
\tau_\eps:=c_{\rm tr}\eps^2,
\qquad
T_\eps:=T_{\tau_\eps}
=\left\lceil\frac{C_{\rm spg}}{c_{\rm tr}\eps^2}\right\rceil.
\end{equation}
The constants in \eqref{eq:fixed-calibration} remain fixed,
while these three parameters vary with $\eps$. The next
theorem shows that this choice both validates the stopping
test and guarantees acceptance when the test fails.

\begin{theorem}[Approximate KKT guarantee or sufficient descent]
\label{thm:one-step}
Suppose that Assumptions~\ref{ass:data} and \ref{ass:slater} hold.
Let $x^k\in\Omega_0$ and $\eps\in(0,1]$, and choose the
parameters according to
\eqref{eq:fixed-calibration}--\eqref{eq:mgspg-calibration},
using the common $t_0$ from Proposition~\ref{prop:uniform-spg}.
Then the prescribed inner budget produces an output, and the
following conclusions hold.
\begin{enumerate}[{\rm (i)}]
\item If $\norm{x^{k+1}-x^k}\le3\eps/(4\kappa_{\rm tr})$,
then $x^k$ is an $\eps$-KKT point.

\item If $\norm{x^{k+1}-x^k}>3\eps/(4\kappa_{\rm tr})$,
then
\begin{equation}\label{eq:strict-feasibility}
G_i(x^{k+1})
<-\frac{\eps^2}{32\kappa_{\rm tr}^2}
\min_{j\in[m]}\rho_{g_j}<0,
\qquad i\in[m].
\end{equation}
Moreover,
\begin{equation}\label{eq:accepted-descent}
F(x^{k+1})
\le F(x^k)-\frac{\mu_f}{16\kappa_{\rm tr}^2}\eps^2.
\end{equation}
Consequently, the output passes the acceptance safeguard,
$x^{k+1}\in\Omega_0$, and the algorithm proceeds to the next
outer iteration.
\end{enumerate}
\end{theorem}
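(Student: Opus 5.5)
The argument rests on three earlier results: the uniform oracle of Proposition~\ref{prop:uniform-spg}, the distance estimate \eqref{eq:oracle-distance}, and Lemma~\ref{lem:kkt-transfer}. Since $\eps\le1$ and $c_{\rm tr}\le1$, we have $\tau_\eps=c_{\rm tr}\eps^2\in(0,1]$. Proposition~\ref{prop:uniform-spg} therefore applies with budget $T_{\tau_\eps}=T_\eps$. It guarantees that SPG returns an output $x^{k+1}$ satisfying \eqref{eq:uniform-oracle} with $\tau=\tau_\eps$.

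The first step is to bound the inner error. By \eqref{eq:oracle-distance} and the choice $c_{\rm tr}\le\mu_f/[32(1+\Lambda)\kappa_{\rm tr}^2]$,
\[
\norm{x^{k+1}-\hat x^{k+1}}\le\sqrt{\frac{2(1+\Lambda)c_{\rm tr}\eps^2}{\mu_f}}\le\frac{\eps}{4\kappa_{\rm tr}}.
\]
This is the reserved quarter of the threshold.

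\textbf{Part (i).} The triangle inequality gives $\norm{\hat x^{k+1}-x^k}\le 3\eps/(4\kappa_{\rm tr})+\eps/(4\kappa_{\rm tr})=\eps/\kappa_{\rm tr}$. The uniform sufficient condition in Lemma~\ref{lem:kkt-transfer} then shows that $x^k$ is an $\eps$-KKT point.

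\textbf{Part (ii).} The reverse triangle inequality gives $d:=\norm{x^{k+1}-x^k}>3\eps/(4\kappa_{\rm tr})$, and also $\norm{\hat x^{k+1}-x^k}>\eps/(2\kappa_{\rm tr})$.

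\emph{Feasibility.} The constraint majorization, combined with the oracle bound $\Psi_{i,k}(x^{k+1})\le\tau_\eps$, gives
\[
G_i(x^{k+1})\le\tau_\eps-\frac{\rho_{g_i}}{2}d^2<c_{\rm tr}\eps^2-\frac{9\rho_{\min}\eps^2}{32\kappa_{\rm tr}^2}.
\]
Using $c_{\rm tr}\le\rho_{\min}/(4\kappa_{\rm tr}^2)=8\rho_{\min}/(32\kappa_{\rm tr}^2)$, the right-hand side is at most $-\rho_{\min}\eps^2/(32\kappa_{\rm tr}^2)$. This is \eqref{eq:strict-feasibility}.

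\emph{Descent.} We chain three inequalities:
\[
F(x^{k+1})\le\Phi_k(x^{k+1})\le\Phi_k(\hat x^{k+1})+\tau_\eps\le F(x^k)-\frac{\mu_f}{2}\norm{\hat x^{k+1}-x^k}^2+\tau_\eps.
\]
The first is objective majorization with the $\rho_f$-term dropped. The second is the oracle bound \eqref{eq:uniform-oracle}. The third is \eqref{eq:strong-surrogate-descent}.

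Inserting $\norm{\hat x^{k+1}-x^k}^2>\eps^2/(4\kappa_{\rm tr}^2)$ gives a decrease of at least $\mu_f\eps^2/(8\kappa_{\rm tr}^2)$. The choice $c_{\rm tr}\le\mu_f/(32\kappa_{\rm tr}^2)$ makes $\tau_\eps$ at most $\mu_f\eps^2/(32\kappa_{\rm tr}^2)$, which is at most half of that decrease. This yields \eqref{eq:accepted-descent} with constant $\mu_f/16$.

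The expected obstacle is the bookkeeping of these constants, in particular checking that $c_{\rm tr}$ dominates both the feasibility slack and the descent slack. For descent I will try the $\hat x^{k+1}$ route above first. A cruder alternative uses $x^{k+1}$ directly via the strong convexity of $\Phi_k$, but that would need a one-sided lower bound on $\Phi_k(x^{k+1})$, which we do not have.

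Finally, with \eqref{eq:strict-feasibility} and \eqref{eq:accepted-descent} in hand, the safeguard in Algorithm~\ref{alg:mgspg} is passed. We also get $F(x^{k+1})\le F(x^k)\le F(x^0)$, so $x^{k+1}\in\Omega_0$, as in the level-set remark.
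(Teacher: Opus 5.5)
Your proposal is correct and follows the paper's proof essentially step for step: the $\eps/(4\kappa_{\rm tr})$ inner-error bound from \eqref{eq:oracle-distance}, the triangle inequality plus Lemma~\ref{lem:kkt-transfer} for (i), and for (ii) the constraint majorization with $c_{\rm tr}\le 8\rho_{\min}/(32\kappa_{\rm tr}^2)$ together with the chain $F(x^{k+1})\le\Phi_k(x^{k+1})\le\Phi_k(\hat x^{k+1})+\tau_\eps\le F(x^k)-\frac{\mu_f}{2}\norm{\hat x^{k+1}-x^k}^2+\tau_\eps$ and $\norm{\hat x^{k+1}-x^k}>\eps/(2\kappa_{\rm tr})$. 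The only cosmetic slip is that $d>3\eps/(4\kappa_{\rm tr})$ is the hypothesis of (ii), not a consequence of the reverse triangle inequality, which is needed only for the lower bound on $\norm{\hat x^{k+1}-x^k}$.
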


\begin{proof}
Since $c_{\rm tr}\le1$ and $\eps\in(0,1]$, we have
$0<\tau_\eps\le1$. Proposition~\ref{prop:uniform-spg}
therefore guarantees an inner output. Furthermore,
\eqref{eq:oracle-distance} and the bound on $c_{\rm tr}$ in
\eqref{eq:fixed-calibration} give
\begin{equation}\label{eq:one-step-inner-distance}
\norm{x^{k+1}-\hat{x}^{k+1}}
\le\sqrt{\frac{2(1+\Lambda)\tau_\eps}{\mu_f}}
=\sqrt{\frac{2(1+\Lambda)c_{\rm tr}}{\mu_f}}\,\eps
\le\frac{\eps}{4\kappa_{\rm tr}}.
\end{equation}

We first prove~(i). If the stopping test holds, the triangle
inequality and \eqref{eq:one-step-inner-distance} yield
\[
\begin{aligned}
\norm{\hat{x}^{k+1}-x^k}
&\le\norm{\hat{x}^{k+1}-x^{k+1}}
+\norm{x^{k+1}-x^k}\\
&\le\frac{\eps}{4\kappa_{\rm tr}}
+\frac{3\eps}{4\kappa_{\rm tr}}
=\frac{\eps}{\kappa_{\rm tr}}.
\end{aligned}
\]
Lemma~\ref{lem:kkt-transfer} therefore shows that $x^k$ is
an $\eps$-KKT point.

We now turn to~(ii). Suppose that
$\norm{x^{k+1}-x^k}>3\eps/(4\kappa_{\rm tr})$.
Constraint majorization and the inner feasibility bound imply,
for every $i\in[m]$,
\[
\begin{aligned}
G_i(x^{k+1})
&\le\Psi_{i,k}(x^{k+1})
-\frac{\rho_{g_i}}2\norm{x^{k+1}-x^k}^2\\
&<c_{\rm tr}\eps^2
-\frac{9\rho_{g_i}}{32\kappa_{\rm tr}^2}\eps^2\\
&\le\frac{8\rho_{\min}-9\rho_{g_i}}
{32\kappa_{\rm tr}^2}\eps^2
\le-\frac{\rho_{\min}}{32\kappa_{\rm tr}^2}\eps^2<0.
\end{aligned}
\]
Here we used $c_{\rm tr}\le\rho_{\min}/(4\kappa_{\rm tr}^2)$
and $\rho_{g_i}\ge\rho_{\min}$. This proves
\eqref{eq:strict-feasibility}.

To obtain descent, observe that the reverse triangle inequality
and \eqref{eq:one-step-inner-distance} give
\[
\begin{aligned}
\norm{\hat{x}^{k+1}-x^k}
&\ge\norm{x^{k+1}-x^k}
-\norm{x^{k+1}-\hat{x}^{k+1}}\\
&>\frac{3\eps}{4\kappa_{\rm tr}}
-\frac{\eps}{4\kappa_{\rm tr}}
=\frac{\eps}{2\kappa_{\rm tr}}.
\end{aligned}
\]
Combining this bound with objective majorization,
\eqref{eq:uniform-oracle}, and
\eqref{eq:strong-surrogate-descent}, we obtain
\[
\begin{aligned}
F(x^{k+1})
&\le\Phi_k(x^{k+1})
\le\Phi_k(\hat{x}^{k+1})+c_{\rm tr}\eps^2\\
&\le F(x^k)
-\frac{\mu_f}{2}\norm{\hat{x}^{k+1}-x^k}^2
+c_{\rm tr}\eps^2\\
&<F(x^k)-\frac{\mu_f}{8\kappa_{\rm tr}^2}\eps^2
+\frac{\mu_f}{32\kappa_{\rm tr}^2}\eps^2\\
&=F(x^k)-\frac{3\mu_f}{32\kappa_{\rm tr}^2}\eps^2
\le F(x^k)-\frac{\mu_f}{16\kappa_{\rm tr}^2}\eps^2,
\end{aligned}
\]
where the strict inequality also uses
$c_{\rm tr}\le\mu_f/[32(1+\Lambda)\kappa_{\rm tr}^2]
\le\mu_f/(32\kappa_{\rm tr}^2)$.
Thus \eqref{eq:accepted-descent} holds. Together with strict
feasibility, it ensures acceptance and gives
$x^{k+1}\in\Omega_0$.
\end{proof}

\begin{remark}[Computed and auxiliary quantities]
%\label{rem:computed-auxiliary}
The stopping test uses only the computed displacement
$\norm{x^{k+1}-x^k}$ and returns the feasible center $x^k$.
The exact solution $\hat{x}^{k+1}$, multiplier $\lambda_k$,
and subgradients $p_1^k,q_{i1}^k$ enter only the analytical
witness. Thus executing the test requires neither an exact
surrogate solve nor a multiplier computation. Its guarantee
is the split-point condition in Definition~\ref{def:near-kkt};
a residual with all subgradients evaluated at the returned
point requires additional regularity. The threshold still
depends on the uniform constants in
\eqref{eq:fixed-calibration}. When only their existence is
known, \eqref{eq:mgspg-calibration} is an a priori
oracle-complexity prescription.
\end{remark}

The one-step alternative now gives a finite termination bound.
Whenever the stopping test fails, the objective decreases
by at least a fixed multiple of $\eps^2$. Since it is bounded
below on $\Omega_0$, only finitely many such steps are possible.
Combining this outer count with the common SPG budget yields
the total oracle cost.

\begin{theorem}[Finite-accuracy and oracle complexity]
%\label{thm:outer-complexity}
Suppose that Assumptions~\ref{ass:data} and \ref{ass:slater} hold.
For any $\eps\in(0,1]$, run Algorithm~\ref{alg:mgspg} with
\eqref{eq:fixed-calibration}--\eqref{eq:mgspg-calibration}
and the common $t_0$ from Proposition~\ref{prop:uniform-spg}.
Then the method returns an $\eps$-KKT point within
\[
 K_\eps:=
 \left\lfloor
 \frac{16\kappa_{\rm tr}^2\bigl(F(x^0)-F_{\inf}\bigr)}
      {\mu_f\eps^2}
 \right\rfloor+1
 =O(\eps^{-2})
\]
outer iterations, without reporting
``insufficient inner accuracy''.
The multiplier in the $\eps$-KKT witness can be chosen
with $\ell_1$ norm at most $\Lambda$.

Moreover, each inner call uses $T_\eps$ selected component proximal
gradient updates, where $T_\eps$ is the integer defined in
\eqref{eq:mgspg-calibration}. In particular,
$T_\eps\le C_{\rm spg}/(c_{\rm tr}\eps^2)+1$.
Consequently, such a point is found using at most
\begin{equation}\label{eq:total-product-bound}
 K_\eps T_\eps
 \leq
 \left(
 \frac{16\kappa_{\rm tr}^2\bigl(F(x^0)-F_{\inf}\bigr)}
      {\mu_f\eps^2}+1
 \right)
 \left(
 \frac{C_{\rm spg}}{c_{\rm tr}\eps^2}+1
 \right)
 =O(\eps^{-4})
\end{equation}
selected component proximal gradient updates.
With full constraint scans, the number of surrogate
constraint-value evaluations is $O(m\eps^{-4})$.
\end{theorem}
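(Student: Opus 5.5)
The plan is to iterate the one-step alternative of Theorem~\ref{thm:one-step} along the outer sequence and count the descent steps. The induction hypothesis is that the center $x^k$ lies in $\Omega_0$. It holds for $k=0$ because $x^0$ is feasible and trivially satisfies $F(x^0)\le F(x^0)$. Given $x^k\in\Omega_0$, Theorem~\ref{thm:one-step} applies with the calibrated parameters \eqref{eq:mgspg-calibration} and the common $t_0$ of Proposition~\ref{prop:uniform-spg}, so the inner call returns an output after exactly $T_\eps$ updates. Two cases then arise.

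\begin{itemize}
\item If the stopping test $\norm{x^{k+1}-x^k}\le\eta_\eps$ holds, part~(i) shows that the returned $x^k$ is an $\eps$-KKT point. The witness in the proof of Lemma~\ref{lem:kkt-transfer} is $(x^k,\hat x^{k+1},\lambda_k)$, and Proposition~\ref{prop:multiplier-bound} gives $\norm{\lambda_k}_1\le\Lambda$.
\item Otherwise, part~(ii) gives strict feasibility \eqref{eq:strict-feasibility} and descent \eqref{eq:accepted-descent}. Hence the safeguard in Algorithm~\ref{alg:mgspg} does not fire, ``insufficient inner accuracy'' is never reported, and $x^{k+1}\in\Omega_0$, which closes the induction.
\end{itemize}

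To count the outer iterations, suppose the stopping test fails at iterations $k=0,\ldots,K-1$. Summing \eqref{eq:accepted-descent} telescopically gives
\[
K\,\frac{\mu_f\eps^2}{16\kappa_{\rm tr}^2}\le F(x^0)-F(x^K)\le F(x^0)-F_{\inf},
\]
where the last inequality uses $x^K\in\Omega_0$ (so $x^K$ is feasible) together with the definition of $F_{\inf}$. Thus $K\le 16\kappa_{\rm tr}^2(F(x^0)-F_{\inf})/(\mu_f\eps^2)$. The number of failing iterations is therefore at most the floor of this quantity, so the test must succeed at some iteration index not exceeding that floor. Since indices start at $0$, at most $K_\eps$ outer iterations are performed.

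For the oracle count, each outer iteration runs exactly $T_\eps$ inner SPG updates. Using $\lceil a\rceil\le a+1$ gives $T_\eps\le C_{\rm spg}/(c_{\rm tr}\eps^2)+1$, and using $\lfloor b\rfloor+1\le b+1$ gives the first factor. Multiplying the two bounds yields \eqref{eq:total-product-bound}. Because every quantity other than $\eps$ is a fixed constant, each factor is $O(\eps^{-2})$ and the product is $O(\eps^{-4})$. Each SPG iteration performs one full scan of $m$ constraint values (Theorem~\ref{thm:inner}), which gives the $O(m\eps^{-4})$ count.

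The argument is essentially bookkeeping. The one point needing care is the off-by-one in $K_\eps$: the terminating iteration is itself an outer iteration that runs an inner call, which is why the bound has the $+1$. A second point is that $F_{\inf}$ is attained as a lower bound on $\Omega_0$, which is justified by the compactness established after Assumption~\ref{ass:data}.
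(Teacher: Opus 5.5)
Your proposal is correct and follows essentially the same route as the paper: induction on $x^k\in\Omega_0$ via Theorem~\ref{thm:one-step}, telescoping the $\eps^2$-descent against $F_{\inf}$ to bound the number of accepted steps, and multiplying by the fixed inner budget $T_\eps$. The only cosmetic difference is that you bound the number of failing iterations directly (an integer $K\le b$ forces $K\le\lfloor b\rfloor$), whereas the paper argues by contradiction from the assumption that the first $K_\eps$ iterations are all nonterminating.
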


\begin{proof}
By Proposition~\ref{prop:uniform-spg} and
Theorem~\ref{thm:one-step}, induction from $x^0\in\Omega_0$
excludes the report ``insufficient inner accuracy''.
Every nonterminating outer iteration satisfies
\[
 x^{k+1}\in\Omega_0,\qquad
 F(x^{k+1})
 \leq F(x^k)-\frac{\mu_f\eps^2}{16\kappa_{\rm tr}^2}.
\]
Thus, if the first $K_\eps$ iterations were nonterminating,
\[
 F_{\inf}
 \leq F(x^{K_\eps})
 \leq F(x^0)
      -\frac{K_\eps\mu_f\eps^2}{16\kappa_{\rm tr}^2}
 <F_{\inf},
\]
a contradiction. Hence the method returns within $K_\eps$
iterations, and~\eqref{eq:Mlambda} gives the multiplier bound.

Next, let $K\leq K_\eps$ be the number of outer iterations
performed, including the terminating iteration, and let $T_k$
be the number of selected updates in its $k$th inner call.
By~\eqref{eq:Ttau}, with $\tau=c_{\rm tr}\eps^2$,
\[
 T_k=T_\eps=\left\lceil\frac{C_{\rm spg}}{c_{\rm tr}\eps^2}\right\rceil
 \leq\frac{C_{\rm spg}}{c_{\rm tr}\eps^2}+1,
 \qquad k=0,\ldots,K-1.
\]
Consequently,
\[
 \sum_{k=0}^{K-1}T_k
 \leq KT_\eps
 \leq K_\eps T_\eps.
\]
Together with the definition of $K_\eps$, this proves
\eqref{eq:total-product-bound}.Each selected component proximal gradient update uses one affine
surrogate gradient and one individual proximal mapping, so each
of these counts is at most $K_\eps T_\eps$. Full constraint scans use
$O(mK_\eps T_\eps)=O(m\eps^{-4})$ surrogate constraint values.
\end{proof}

% Standalone numerical-experiments section.
% Required packages: amsmath, booktabs, graphicx, multirow.
% This source was not compiled as part of the experiment.

% Standalone numerical-experiments section.
% Required packages: amsmath, amssymb, booktabs, graphicx.
% This file is intentionally not a complete LaTeX document.

% Final numerical-experiments section.
% Required packages: amsmath, amssymb, booktabs, graphicx.
% This file is intentionally not a complete LaTeX document.
%
% If this folder is next to the main manuscript, place
%   \renewcommand{\MGSPGExpPath}{MGSPG_final_experiments/}
% before \input{MGSPG_final_experiments/MGSPG_Numerical_Experiments.tex}.

\section{Numerical experiments}\label{sec:numerical}

In this section, we benchmark the proposed MGSPG (Algorithm~2) against PGSSM~\cite[Algorithm~3.2]{JiaGrimmer2025} on a SCAD-constrained regression problem (Section~\ref{subsec:scad}) and against IDCA--SSG~\cite{HeHuangYaoLin2025} on the groupwise-constrained model of Example~1.1 (Section~\ref{subsec:groupwise}). All algorithms are implemented in \textsc{Matlab} (Release R2025b) and executed on a PC equipped with an Intel Core i7-8550U CPU ($1.80\,\mathrm{GHz}$) and $16\,\mathrm{GB}$ of RAM.

\subsection{Evaluation criterion and execution metrics}%\label{subsec:criterion}
\noindent\textit{Evaluation.}
For $G(x) := (G_1(x), \dots, G_m(x))^\top$ and $\lambda \ge 0$, set $H_\lambda := P_1 + \sum_i \lambda_i Q_{i1}$ and introduce the affine-in-$\lambda$ vector
\[
 a(x, \lambda) := \nabla f(x) - \xi(x) + \sum_{i=1}^m \lambda_i \bigl(\nabla g_i(x) - \zeta_i(x)\bigr),
\]
where $\xi(x) = \nabla P_2(x)$ and $\zeta_i(x) = \nabla Q_{i2}(x)$ at points of differentiability. When $Q_{i2}(x) = \alpha_i \|D_i x\|_2$, we adopt the canonical subgradient selection $\zeta_i(x) := \alpha_i D_i^\top D_i x / \|D_i x\|_2$ if $D_i x \neq 0$, and $\zeta_i(x) := 0$ otherwise. To assess first-order criticality and complementary slackness, we define the normalized stationarity--complementarity residual
\[
 R_{\mathrm{sc}}(x) := \min_{0\le\lambda\le\Lambda_{\max}} \max \left\{ \frac{\|x - \operatorname{prox}_{H_\lambda}(x - a(x,\lambda))\|}{1 + \|x\|}, \; \frac{\|\lambda \odot G(x)\|_\infty}{1 + \|\lambda\|_\infty} \right\}.
\]

Here, $\odot$ denotes the Hadamard product. Let $\widehat{R}_{\mathrm{sc}}(x)$ denote the smallest residual obtained from admissible multiplier candidates. We evaluate numerical KKT accuracy using
\[
 R_{\mathrm{KKT}}^{\mathrm{num}}(x) := \max \left\{ \frac{\widehat{R}_{\mathrm{sc}}(x)}{\epsilon_{\mathrm{KKT}}}, \; \frac{\|[G(x)]_+\|_\infty}{\epsilon_{\mathrm{feas}}} \right\}, \quad \epsilon_{\mathrm{KKT}} = \epsilon_{\mathrm{feas}} = 10^{-2}.
\]
The common accuracy criterion is $R_{\mathrm{KKT}}^{\mathrm{num}}(x) \le 1$. For the diagnostic index set $\mathcal K_{\mathrm{diag}}$, efficiency is measured by the first observed attainment time
\[
 t_{\mathrm{hit}}^{\mathrm{wall}}
 := \min\bigl\{t_k^{\mathrm{wall}} : k\in\mathcal K_{\mathrm{diag}},\,
 R_{\mathrm{KKT}}^{\mathrm{num}}(x^k)\le1\bigr\},
 \quad \min\varnothing:=+\infty.
\]

\smallskip\noindent\textit{Experimental protocol.}
In each comparison, both methods are tested on the same 50 instances and initialized at $x^0=0$. We define $B_k:=\sum_{j=0}^{k-1}T_j$, where $T_j$ is the prescribed inner-iteration limit at outer iteration $j$. Each inner solve is counted as $T_j$ iterations, even if stopped early. At outer iteration \(k\), the inner algorithm is executed only if \(B_k+T_k\le B_{\max}\). For the SCAD problem, the displacement stopping criterion $\|x^{k+1}-x^k\|\le\eta$ is disabled to facilitate fixed-budget comparisons. In contrast, for the groupwise model, MGSPG retains this condition alongside the feasibility and descent checks in Algorithm~2, whereas IDCA--SSG uses $B_{\max}$ as its iteration limit.

The numerical KKT residuals are evaluated offline and are not used in the stopping criteria. They are evaluated at every accepted SCAD iterate and, for the groupwise model, at the last accepted iterate at or before each increment of 1000 in $B_k$, including the initial and terminal iterates. The wall-clock times in Table~\ref{tab:groupwise} and Figs.~\ref{fig:scad-perf}--\ref{fig:groupwise-scaling} exclude data generation, file input/output, and KKT residual evaluation.

\subsection{SCAD-constrained regression}\label{subsec:scad}
Inspired by the example in \cite[Section~5]{JiaGrimmer2025}, we replace the nonsmooth loss therein with the smooth Cauchy loss $\ell_\kappa(t) := \frac{\kappa^2}{2}\log(1 + t^2/\kappa^2)$ and consider the following SCAD-constrained regression problem
\begin{equation*}\label{eq:scad-prob}
    \min_{x\in\mathbb{R}^n} \frac{1}{N}\sum_{r=1}^N \ell_\kappa(a_r^\top x - b_r) + \lambda_{\mathrm{reg}}\|x\|_1 \quad \text{s.t.} \quad \sum_{j=1}^n s(x_j) \le p,
\end{equation*}
where $s(t) := 2|t|$ for $|t|\le 1$, $s(t) := -t^2+4|t|-1$ for $1 < |t|\le 2$, and $s(t) := 3$ for $|t|>2$.

For this problem, we set $N = n = 120$, $\kappa = 1$, and $\lambda_{\mathrm{reg}} = 10^{-3}$. The sensing matrix $A = [a_1, \dots, a_N]^\top$ has i.i.d.\ $\mathcal{N}(0,1)$ entries, $x^\star$ contains $30$ non-zero components drawn uniformly from $[-10,-5]\cup[5,10]$, and the observation vector satisfies $b = Ax^\star + \varepsilon$ with $\varepsilon \sim \mathcal{N}(0,I)$. The bounds $p \in \{91, 320\}$ correspond to nearly active and inactive regimes, respectively, while avoiding the degenerate points $p \in 3\mathbb{Z}$.

\subsubsection{Parameter settings}
For the SCAD constraint, we use $g_1(x)\equiv-p$, $Q_{11}(x)=2\|x\|_1$, and $Q_{12}(x)=\sum_{j=1}^n q(x_j)$, where
\[
q(t)=
\begin{cases}
0, & |t|\le1,\\
(|t|-1)^2, & 1<|t|\le2,\\
2|t|-3, & |t|>2.
\end{cases}
\]
Then $s(t)=2|t|-q(t)$ and $L_{g_1}=0$, so $\mu_g=\rho_g$.

The parameters for MGSPG are set to $(\rho_f, \rho_g, \tau, T, \eta) = (0.1, 2.97, 10^{-4}, 10^4, 10^{-3})$, with $\mu_f = L_f + \rho_f$, $\mu_g = \rho_g$, $L_f = \|A\|_2^2/N$, and $t_0$ chosen as the smallest nonnegative integer satisfying the stepsize condition in Algorithm~\ref{alg:mgspg}. These practical choices are selected independently of the theoretical calibration in \eqref{eq:fixed-calibration} and \eqref{eq:mgspg-calibration}. PGSSM is implemented following~\cite[Algorithm~3.2]{JiaGrimmer2025} with $\epsilon = 10^{-2}$, $T = 10^4$, $\rho = \max\{3, L_f/8 + 10^{-6}\}$, $\hat{\rho} = 2\rho$, $\operatorname{sign}(0) = 0$, and the identity map replacing the box projection.

Both algorithms share a budget cap of $B_{\max} = 10^7$. Scaling tests are conducted over $10$ paired trials per dimension $N = n \in \{200, 400, 600, 800\}$, each employing $10^5$ inner updates, support size $n/4$, and $p = 3n/4 + 1$. For $P_1 = \lambda_{\mathrm{reg}}\|\cdot\|_1$ and $Q_{11} = 2\|\cdot\|_1$, we define $[\mathcal{S}_\theta(v)]_j := \operatorname{sign}(v_j)\max\{|v_j|-\theta, 0\}$. The inner-loop iterations and offline KKT evaluations utilize
\[
\operatorname{prox}_{\gamma P_1} = \mathcal{S}_{\gamma\lambda_{\mathrm{reg}}}, \quad
\operatorname{prox}_{\gamma Q_{11}} = \mathcal{S}_{2\gamma}, \quad
\operatorname{prox}_{H_\lambda} = \mathcal{S}_{\lambda_{\mathrm{reg}} + 2\lambda},
\]
where $\gamma > 0$ denotes the stepsize and $\lambda \ge 0$ represents the constraint multiplier.

\FloatBarrier
\subsubsection{Numerical results for SCAD-constrained regression}
Across all 50 trials in both regimes, both solvers reached the benchmark $R_{\mathrm{KKT}}^{\mathrm{num}} \le 1$. However, MGSPG required $\mathbf{33-34}$\% fewer inner updates and $\mathbf{58-59}$\% less wall-clock time, yielding an overall $\mathbf{2.4}$-fold speedup (Figure~\ref{fig:scad-perf}e,f). Under the full budget $B_{\max} = 10^7$, median runtimes were $153\,\mathrm{s}$ for MGSPG versus $254\,\mathrm{s}$ for PGSSM (a 1.66-fold gain). Over the common horizon ($\approx 150\,\mathrm{s}$), MGSPG attained 8.1\% ($p = 91$) and 13.6\% ($p = 320$) lower objective values alongside 49--57\% smaller $R_{\mathrm{KKT}}^{\mathrm{num}}$ residuals. 

\begin{herefigure}
\centering
\includegraphics[width=\textwidth,clip]{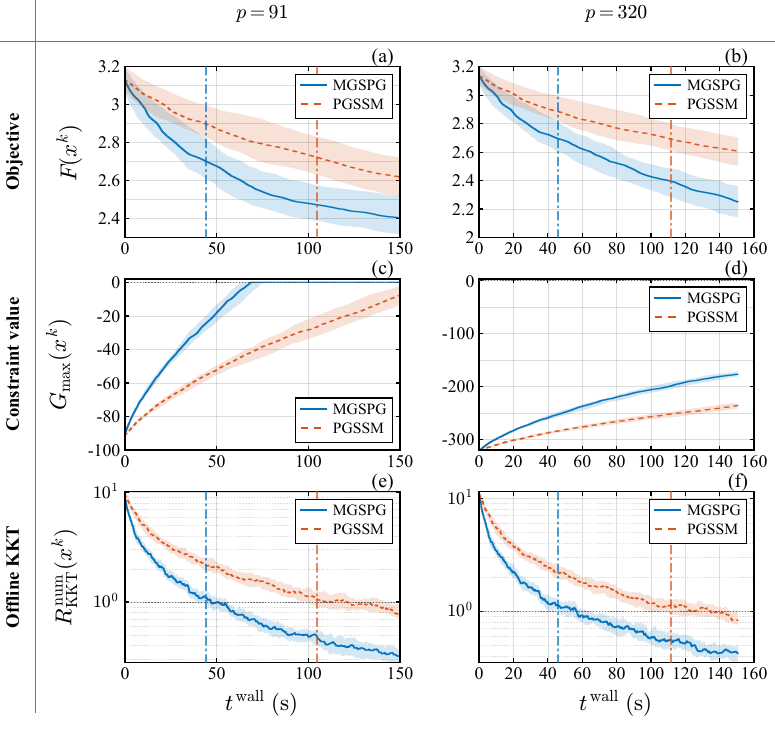}
% Plot construction is unchanged: last observations are carried forward to the earliest terminal time.
\captionof{figure}{Performance comparison of MGSPG and PGSSM for SCAD-constrained regression. Curves and bands show medians and interquartile ranges over 50 paired instances. Dash-dotted vertical lines mark median hit times.}
\label{fig:scad-perf}
\end{herefigure}

While both methods maintained strictly feasible outer iterates, MGSPG identified the active constraint boundary substantially earlier when $p = 91$, whereas the constraint remained inactive for $p = 320$. This computational advantage is even more pronounced in the scaling study ($10^5$ updates, $N = n$ increasing from 200 to 800). The runtime of MGSPG grew by only $\mathbf{35.9}$\% ($1.64\,\mathrm{s}$ to $2.22\,\mathrm{s}$) compared to a $\mathbf{420.2}$\% surge ($2.89\,\mathrm{s}$ to $15.04\,\mathrm{s}$) for PGSSM, widening the speedup factor from 1.77 to 6.76 (Figure~\ref{fig:scad-cost}b) and demonstrating superior scalability on high-dimensional problems.

\begin{herefigure}
\centering
\includegraphics[width=\textwidth,clip]{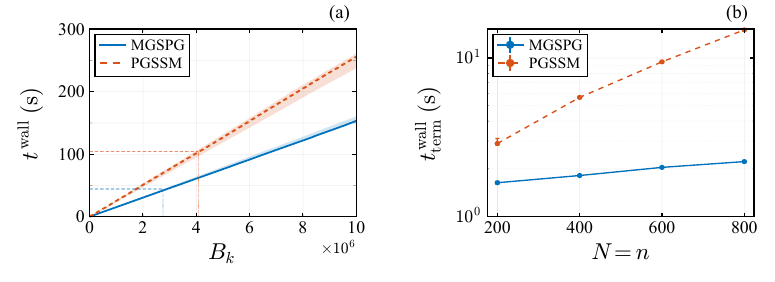}
\captionof{figure}{SCAD runtime comparison versus (a) budget ($p=91$) and (b) dimension ($10^5$ updates). Dashed lines in (a) mark median hit times and budgets.}
\label{fig:scad-cost}
\end{herefigure}
\FloatBarrier

\subsection{Robust regression with groupwise validation constraints}\label{subsec:groupwise}
Numerical experiments are conducted on the model in Example~\ref{ex:application-regression} with $n = N_0 \in \{60, 120, 240\}$, $m \in \{n/2, n, 2n\}$, and $N_i = \lceil n/3 \rceil$. Both solvers are evaluated on the same $50$ instances for each pair of $(n, m)$. The sensing matrices $A$ and $C_i$ are generated with i.i.d.\ standard Gaussian entries and subsequently column-normalized to Euclidean norms of $\sqrt{N_0}$ and $\sqrt{N_i}$, respectively.

The ground-truth signal is defined as $x^\star = 5\sqrt{N_0} v / \|Av\|$, where $v \in \{-1, 0, 1\}^n$ has a support $S$ of cardinality $30$ and independent random signs. For each group, a three-element subset $B_i \subset S$ is sampled uniformly, and we set $x_i^\star = x^\star - 2 P_{S \setminus B_i} x^\star$, which reverses the signs on $S \setminus B_i$.

Observations are generated according to $b = A x^\star + \varepsilon_0 + \eta_0$ and $d_i = C_i x_i^\star + \varepsilon_i + \eta_i$, where $\varepsilon_0$ and $\varepsilon_i$ are standard Gaussian noise vectors. The sparse outlier vectors $\eta_0$ and $\eta_i$ contain $12$ and $1 + \operatorname{mod}(i-1, 10)$ non-zero entries, respectively, each having a magnitude of $10$ with a random sign. The diagonal weights of each $D_i$ are sampled independently from $\operatorname{Uniform}(0.5, 1.5)$ and rescaled to have a sample mean of one.

\subsubsection{Parameter settings}
We set $\kappa_i = 5$ ($i = 0, \ldots, m$), $\beta = 1$, $\lambda_{\mathrm{reg}} = 0.1$, and $a_{\mathrm{mcp}} = 3$. For $i = 1, \ldots, m$, let $\mathcal{L}_i(x) := N_i^{-1} \sum_{r=1}^{N_i} \ell_{\kappa_i}(c_{ir}^\top x - d_{ir})$. The constraint parameters are defined by
\begin{equation*}
\nu_i = \mathcal{L}_i(0) + \frac{L_{g_i}F(0)}{8\beta} + \bar\delta,
\quad
\alpha_i = \vartheta \frac{\mathcal{L}_i(0)}{\|D_i x^\star\|_1 - \|D_i x^\star\|_2},
\end{equation*}
where $L_{g_i} = \|C_i\|_2^2 / N_i$ and $\bar\delta = 10^{-4}$. Since $G_i(0) = -L_{g_i}F(0)/(8\beta) - \bar\delta < 0$, the initial point $x^0 = 0$ is strictly feasible. We fix $\vartheta = 2$ for all nine $(n, m)$ pairs, following a selection from $\{1, 2, 3\}$ in an independent pilot study ($n = 120$, $m = 10$, $T = 10^4$). This selection relies on neither terminal objective values nor KKT residuals.

%\Needspace{5\baselineskip}
Both methods use the same DC decomposition and quadratic surrogates, with $\rho_f = \rho_{g_i} = 0.1$, $\mu_f = L_f + \rho_f$, and $\mu_{g_i} = L_{g_i} + \rho_{g_i}$, where $L_f = \|A\|_2^2 / N_0$. The common budget is set to $B_{\max} = 2 \times 10^5$. MGSPG uses $(\tau, T, \eta) = (10^{-4}, 2000, 10^{-3})$, with $t_0$ chosen as the smallest nonnegative integer satisfying the stepsize condition in Algorithm~\ref{alg:mgspg} and a tolerance of $10^{-10}$ for the feasibility and descent checks. IDCA--SSG uses $(\epsilon, T) = (10^{-3}, 200)$, a surrogate feasibility tolerance of $\epsilon + 10^{-8}$, and the identity projection onto $\mathbb{R}^n$.

For $D_i = \operatorname{diag}(w_{i1}, \ldots, w_{in})$, the convex terms $P_1(x) = \lambda_{\mathrm{reg}}\|x\|_1 + \frac{\beta}{2}\|x\|^2$ and $Q_{i1}(x) = \alpha_i\|D_i x\|_1$ yield the proximal mappings
\[
\begin{aligned}[t]
[\operatorname{prox}_{\gamma P_1}(u)]_j
&= \frac{\operatorname{sign}(u_j)\max\{|u_j| - \gamma\lambda_{\mathrm{reg}}, 0\}}{1 + \gamma\beta}, \\
[\operatorname{prox}_{\gamma Q_{i1}}(u)]_j
&= \operatorname{sign}(u_j)\max\{|u_j| - \gamma\alpha_i w_{ij}, 0\}, \\
[\operatorname{prox}_{H_\lambda}(u)]_j
&= \frac{\operatorname{sign}(u_j)\max\{|u_j| - \lambda_{\mathrm{reg}} - \sum_{i=1}^m \lambda_i \alpha_i w_{ij}, 0\}}{1 + \beta}.
\end{aligned}
\]
Here, $\gamma > 0$ denotes the inner stepsize and $\lambda \in \mathbb{R}_+^m$ represents the multiplier vector. The first two mappings are employed in the inner iterations, whereas $\operatorname{prox}_{H_\lambda}$ is reserved for offline KKT evaluations.
\subsubsection{Numerical results for robust regression}%\label{subsubsec:robust-results}

Both methods successfully attained $R_{\mathrm{KKT}}^{\mathrm{num}} \le 1$ across all $450$ instances. MGSPG required less wall-clock time to reach this criterion in $449$ of the $450$ paired runs (Table~\ref{tab:groupwise}). The median paired hit-time ratio $t_{\mathrm{hit}}^{\mathrm{I}} / t_{\mathrm{hit}}^{\mathrm{M}}$ increased from $\mathbf{1.28}$ at $(n, m) = (60, 30)$ to $\mathbf{3.85}$ at $(240, 480)$. Specifically, at $(n, m) = (240, 480)$, the median hit times were $\mathbf{5.23}\,\mathrm{s}$ for MGSPG versus $\mathbf{19.82}\,\mathrm{s}$ for IDCA--SSG.

\begin{herefigure}
\centering
\includegraphics[width=0.94\textwidth,clip]{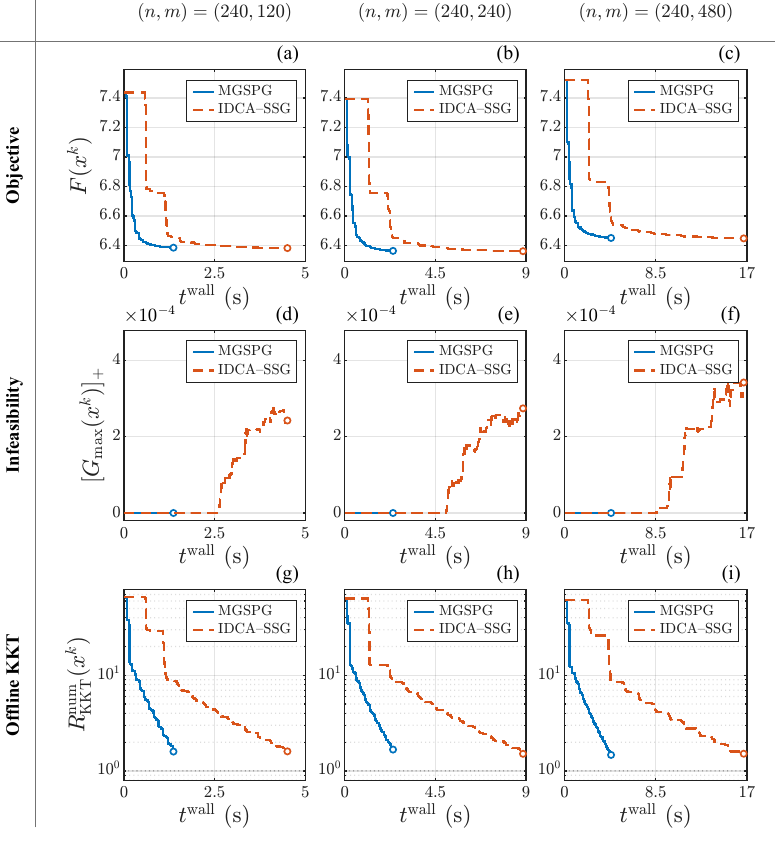}
\captionof{figure}{Performance comparison of MGSPG and IDCA--SSG for groupwise-constrained regression.}
\label{fig:groupwise-traj}
\end{herefigure}

The objective value and numerical KKT residual decreased faster under MGSPG across all three $n = 240$ configurations (Figure~\ref{fig:groupwise-traj}), with all recorded MGSPG outer iterates remaining strictly feasible. Each method's median curve terminates at its earliest hit time over the $50$ instances (marked by circles). For each fixed ratio $m/n \in \{1/2, 1, 2\}$, the median hit time grew more moderately for MGSPG than for IDCA--SSG as $n$ and $m$ increased over the tested scales (Figure~\ref{fig:groupwise-scaling}). As reflected in Table~\ref{tab:groupwise}, MGSPG is faster than IDCA--SSG, with the computational acceleration becoming increasingly pronounced as the problem size scales up.

MGSPG handles the convex terms $P_1$ and $Q_{i1}$ through closed-form proximal mappings, whereas IDCA--SSG uses subgradients of the complete surrogate functions. Under the inner-iteration limits used in these experiments, MGSPG performs fewer outer iterations at a fixed inner-iteration budget and therefore recomputes the group loss gradients less often. This reduces repeated gradient computation, whose cost increases with $n$ and $m$.

\FloatBarrier

\begin{table}[!htbp]
\caption{Median results for groupwise-constrained regression (M: MGSPG, I: IDCA--SSG). Ratio is the median of the paired values $t_{\mathrm{hit}}^{\mathrm{I}}/t_{\mathrm{hit}}^{\mathrm{M}}$. Terminal values are evaluated at each method's termination. MGSPG's constraint-violation medians are zero.}
\label{tab:groupwise}
\centering\fontsize{8}{9.5}\selectfont\setlength{\tabcolsep}{4.5pt}
\begin{tabular}{@{}c rr c rr rr c@{}}
\toprule
& \multicolumn{2}{c}{$t_{\mathrm{hit}}^{\mathrm{wall}}$ (s)} &
& \multicolumn{2}{c}{$F(x^{\mathrm{term}})$}
& \multicolumn{2}{c}{$R_{\mathrm{KKT}}^{\mathrm{num}}(x^{\mathrm{term}})$}
& $[G_{\max}(x^{\mathrm{term}})]_+$\\
\cmidrule(lr){2-3}\cmidrule(lr){5-6}\cmidrule(lr){7-8}\cmidrule(l){9-9}
$(n,m)$ & M & I & Ratio & M & I & M & I & I $(\times10^{-4})$\\
\midrule
(60, 30)   & 0.474 &  0.591 & 1.28 & 7.87078 & 7.87020 & 0.0931 & 0.293 & 4.95\\
(60, 60)   & 0.780 &  1.425 & 1.81 & 7.90429 & 7.90301 & 0.102  & 0.277 & 6.41\\
(60, 120)  & 1.051 &  2.019 & 1.98 & 8.09603 & 8.09583 & 0.0895 & 0.247 & 4.30\\
\addlinespace
(120, 60)  & 0.920 &  1.909 & 2.09 & 6.97914 & 6.97828 & 0.116  & 0.357 & 4.21\\
(120, 120) & 1.418 &  3.163 & 2.30 & 6.85903 & 6.85781 & 0.108  & 0.350 & 4.05\\
(120, 240) & 2.652 &  6.648 & 2.54 & 6.92114 & 6.91939 & 0.106  & 0.308 & 4.05\\
\addlinespace
(240, 120) & 1.746 &  5.634 & 3.28 & 6.37989 & 6.37908 & 0.115  & 0.473 & 2.78\\
(240, 240) & 3.046 & 10.985 & 3.59 & 6.35756 & 6.35681 & 0.117  & 0.476 & 3.63\\
(240, 480) & 5.231 & 19.817 & 3.85 & 6.44365 & 6.44272 & 0.104  & 0.410 & 3.48\\
\bottomrule
\end{tabular}
\end{table}

MGSPG stopped on the displacement criterion in 367 runs and reported insufficient inner accuracy in 83 runs. All 83 returned iterates satisfied $R_{\mathrm{KKT}}^{\mathrm{num}}\le1$. IDCA--SSG used the full budget in all 450 runs.

\begin{herefigure}
\centering
\includegraphics[width=0.94\textwidth,trim={0bp 6bp 0bp 10bp},clip]{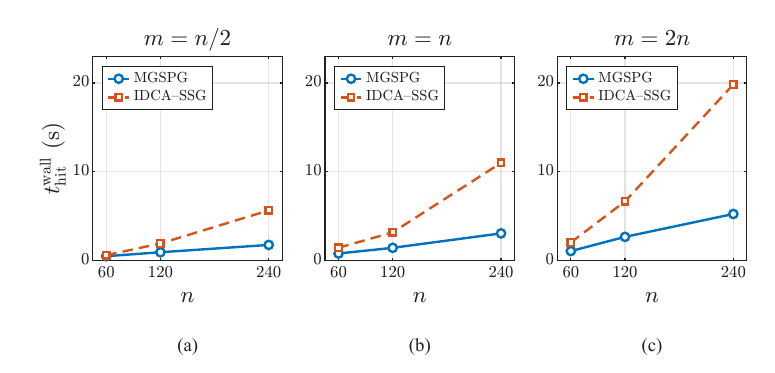}
\captionof{figure}{Scalability comparison of MGSPG and IDCA--SSG for groupwise-constrained regression.}
\label{fig:groupwise-scaling}
\end{herefigure}
\FloatBarrier
%\clearpage

\section{Conclusion}\label{Conclusion}

To exploit the individual proximal structures of composite constraints,
we developed a majorization-guided switching proximal gradient (MGSPG)
method for difference-of-convex optimization with multiple inequality
constraints. The method approximately solves strongly convex surrogate
problems by switching between objective and constraint updates, without
projection onto the joint surrogate feasible set or multiplier computation. 
We established an $O(\tau^{-1})$ inner iteration bound. Under boundedness of the feasible level set and a uniform Slater condition for the surrogates,
MGSPG finds an $\varepsilon$-KKT point in $O(\varepsilon^{-2})$ outer
iterations and $O(\varepsilon^{-4})$ selected component proximal gradient
updates. Numerical experiments demonstrate
the efficacy and competitive performance of the proposed method.

\end{document}